\theoremstyle{plain}
\newtheorem{theorem}{Theorem}[section]
\newtheorem*{theorem*}{Theorem}
\newtheorem{proposition}[theorem]{Proposition}
\newtheorem{corollary}[theorem]{Corollary}
\newtheorem{lemma}[theorem]{Lemma}
\theoremstyle{definition}
\newtheorem{definition}[theorem]{Definition}
\newtheorem*{remark}{Remark}
\newtheorem*{conjecture}{Conjecture}
\theoremstyle{definition}
\newcommand{\R}{{\mathbb R}}
\newcommand{\N}{\mathbb N}
\newcommand{\A}{\mc{U}}
\newcommand{\B}{\mc{L}}
\newcommand{\nil}{\varnothing}
\newcommand{\defn}[1]{\emph{#1}}
\newcommand{\boundary}{\partial}
\newcommand{\mc}[1]{\mathcal{#1}}
\newcommand{\diam}{\operatorname{diam}} 
\renewcommand{\b}{\frak{b}}
\renewcommand{\u}{\frak{u}}
\renewcommand{\d}{\frak{d}}
\newcommand{\co}{\mskip0.5mu\colon\thinspace}
\begin{document}

   \title{Neighbors of knots in the Gordian graph}
   \author{Ryan Blair, Marion Campisi, Jesse Johnson, Scott A. Taylor, Maggy Tomova}

   \maketitle
   
   \begin{abstract}
The Gordian graph is the graph with vertex set the set of knot types and edge set consisting of pairs of knots which have a diagram wherein they differ at a single crossing. Bridge number is a classical knot invariant which is a measure of the complexity of a knot. It can be refined by another, recently discovered, knot invariant known as ``bridge distance''. We show, using arguments that are almost entirely elementary, that each vertex of the Gordian graph is adjacent to a vertex having arbitrarily high bridge number and bridge distance. 
\end{abstract}



\section{Introduction}

Understanding the effect of crossing changes on knots is a central endeavor of knot theory. For example, each knot $K$ can be transformed into the unknot by changing crossings, but the minimal number $\u(K)$ of such crossing changes needed (the \defn{unknotting number} of the knot), though venerable, is poorly understood. Another classical knot invariant, the bridge number $\b(K)$ (see Definition \ref{Def: Bridge}), is much better understood. Might there be some relation between them? Sadly, some familiarity with examples shows that they are probably unrelated. For example, a $(p,q)$-torus knot (that is, a knot lying on an unknotted torus $T$ wrapping $p$-times meridionally and $q$ times longitudinally, with $p$ and $q$ relatively prime) has $\u(K) = (p-1)(q-1)/2$ \cite{KM, R}, but $\b(K) = \min(p,q)$ \cite{Schubert, Schultens}. In particular, there are knots with fixed bridge number and arbitrarily large unknotting number. On the other hand, there are also knots (the \emph{iterated Whitehead doubles}) with unknotting number equal to 1 and bridge number arbitrarily large. These examples are all very special, so we are still left with the questions: How might we construct knots of given unknotting number and a given bridge number? How might we construct  infinitely many knots of a given unknotting number and a given bridge number?

The \defn{Gordian Graph}\footnote{The name pays homage to the well-known story of Alexander the Great slicing the Gordian knot.} is an object which organizes knots by the effect of crossing changes. It is the undirected graph with vertex set the set of knot types and with edge set consisting of pairs of distinct knot types which are related by a single crossing change. We say that a pair of knot types defining an edge are \defn{neighbors in the Gordian graph}. Thus, the neighbors of the unknot are precisely the knots $K$ with $\u(K) = 1$ and, in general, $\u(K)$ is the minimum number of edges in an edge path in the Gordian Graph from $K$ to the unknot.

Our main tool for studying the distribution of bridge numbers in the Gordian Graph is a relatively new, but very powerful, natural number knot invariant known as \emph{bridge distance}. The \defn{bridge distance} of a knot $K$ is denoted $\d(K)$ (see Definition \ref{Def: bridge distance}.) It is an invariant which allows us to distinguish between knots having the same bridge number and is larger the more complicated a knot is. Since the unknotting number is also a measure of the complexity of a knot, we might hope that there is some connection between the unknotting number and bridge distance. We prove, however, that this is not the case.

\begin{theorem}\label{Main Theorem}
For every knot $K$ and every $(b,n) \in \N^2$ with $b \geq \max(3, \b(K))$, there exists a knot $K'$ differing from $K$ by a single crossing change such that $\b(K') = b$ and $\d(K') \geq n$. \end{theorem}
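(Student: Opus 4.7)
The plan is to construct $K'$ via a tangle-replacement that realizes a single crossing change applied to $K$. I would isotope $K$ so that it meets a small $3$-ball $B\subset S^3$ in a trivial two-strand tangle containing exactly one crossing; flipping this crossing inside $B$ produces a knot $K'$ related to $K$ by one crossing change, so $K'$ is automatically a neighbor of $K$ in the Gordian graph. The entire task then reduces to arranging the rest of $K$ so that after this local change the resulting $K'$ has $\b(K')=b$ and $\d(K')\geq n$.

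To achieve the quantitative control, I would design the exterior $E=S^3\setminus\inter{B}$ as a carefully chosen two-strand tangle $\tau$ with two properties: capped by the original crossing inside $B$, $\tau$ reassembles to $K$; and capped by the opposite crossing, $\tau$ produces a knot carrying a $b$-bridge sphere of large distance. The standard route is to insert into $\tau$ a high power of a Dehn twist along a curve filling an essential subsurface of a bridge surface for the tangle. A twisting-and-distance argument in the curve graph of the bridge sphere (in the spirit of Hempel and Johnson--Moriah) forces the distance of the resulting bridge splitting to grow linearly in the number of twists, so enough twists push $\d(K')$ above $\max(n,2b)$; at that point, known bridge-number rigidity (of Hartshorn--Tomova type) forces $\b(K')=b$ exactly.

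The main obstacle I anticipate is the realizability step: for an arbitrary $K$ and every target $b\geq \max(3,\b(K))$, one must exhibit an isotopy of $K$ that simultaneously (a) isolates a single crossing of $K$ in a small ball $B$, (b) presents the complementary tangle $K\cap E$ in $b$-bridge position, and (c) leaves enough room in the complementary bridge sphere to absorb a twist supported on an essential filling subsurface. I expect this to be handled by an explicit diagrammatic construction --- start from a minimal-bridge presentation of $K$, add $b-\b(K)$ trivial bridges to bring the bridge number up to $b$, push one crossing of $K$ into a small ball, and verify directly that the remaining arcs form a $b$-strand tangle whose bridge sphere admits an essential subsurface on which a high-power Dehn twist can act. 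The abstract's emphasis on elementary methods suggests this realizability is carried out concretely rather than via heavy curve-complex machinery, with the twisting-and-distance estimate proved by hand on the twisted bridge sphere.
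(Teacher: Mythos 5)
There is a genuine gap in your proposal, and it concerns the very first reduction. You write ``the entire task then reduces to arranging the rest of $K$'' and then propose to \emph{design} the exterior tangle $\tau = K\cap E$, in particular to ``insert into $\tau$ a high power of a Dehn twist along a curve filling an essential subsurface.'' But $\tau$ is not a free parameter: once you isotope $K$ and choose the small ball $B$ isolating a crossing, the tangle $\tau$ is whatever $K\setminus \inter{B}$ happens to be. You may choose the crossing disc (equivalently, the ball $B$), but you may not modify $\tau$---if you insert extra twisting into $\tau$, you have changed $K$, and the theorem requires $K'$ to be a neighbor of the \emph{given} arbitrary knot $K$. Worse, any twisting baked into $\tau$ would be present in both $K$ and $K'$, so it cannot be the mechanism that makes $d(K')$ large while $d(K)$ stays small. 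And if instead you meant to take the high twist power \emph{at} the crossing disc, then the result is a $1/k$-surgery on the crossing link (a generalized crossing change), not a single crossing change, so the Gordian-neighbor conclusion would fail. Either reading leaves a hole at the heart of the argument.

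The paper's actual route is different in exactly the place where your proposal stalls. There the crossing change \emph{is} a single Dehn twist along a curve $\ell_3$ bounding a twice-punctured disc in a bridge sphere $P$ for $K$ (obtained by perturbing a minimal bridge sphere $b-\b(K)$ times). All of the flexibility lives in the \emph{choice of the twisting curve}, not in the number of twists and not in redesigning the exterior tangle. One starts by finding (Proposition \ref{FFC}) a curve $\ell_1$ bounding a twice-punctured disc far from both disc sets $\A\cup\B$; then (Lemma \ref{First step}, applied twice, with the diameter bound of Lemma \ref{diameter}) one finds nested curves $\ell_2$ and $\ell_3$ so that every geodesic from $[\ell_3]$ to the disc sets passes through $[\ell_1]$ and $[\ell_2]$, with a controlled subsurface-projection gap. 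The point is that the \emph{depth} of $\ell_3$ behind $\ell_1$ and $\ell_2$ in $\mc{C}(P)$, together with the Intersection Lemma \ref{IL} and the ``Twisting kills subsurface distance'' lemma, forces a \emph{single} twist $\tau$ to push $\A'=\tau_*\A$ far from $\B$ in the $F_1$-subsurface projection; Lemma \ref{Minimal paths} then converts this into a lower bound on $d(\A',\B)$, and Corollary \ref{Cor: Tomova} pins down both $\b(K')=b$ and $\d(K')\geq n$. Your closing hope---that the realizability step might be handled ``concretely rather than via heavy curve-complex machinery''---is not borne out: the paper's proof leans essentially on subsurface projections and the curve complex (what it avoids is the PML technology of Lustig--Moriah), and it is precisely this machinery that replaces the unavailable high-power twist.
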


In particular, every knot has neighbors in the Gordian graph of arbitrarily high bridge number and arbitrarily high bridge distance.

\subsection{Notation, Terminology, and Conventions}
If $X$ is a topological space, then $|X|$ will denote the number of connected components of $X$. If $X$ is a manifold-with-boundary, then $\boundary X$ denotes its (possibly empty) boundary.

Given topological spaces $X$ and $Y$ and a function $f \co X \to Y$ which is a homeomorphism onto its image, an isotopy of $f$ is a continuous function $F \co X \times [0,1] \to Y$ such that, for every $x \in X$, $F(x,0) = f(x)$ and for every $t \in [0,1]$, the function $f(\cdot, t) \co X \to Y$ is a homeomorphism onto its image. If  $X$ is a subspace of $Y$, then an \defn{isotopy} of $X$ in $Y$ is an isotopy of the inclusion map.  The isotopy is an \defn{ambient isotopy} if it can be extended to an isotopy of $Y$ in itself. All of the isotopies of subspaces appearing in this paper will be ambient isotopies (and the spaces $X$ and $Y$ will be clear from the context) so, for simplicity, we will simply refer to ambient isotopies as isotopies. If $X$ and $Y$ are both manifolds and if $X \subset Y$, we say that $X$ is \emph{properly embedded} in $Y$ if it is a submanifold of $Y$ and if $X \cap \boundary Y = \boundary X$. An isotopy of $X$ is a \defn{proper isotopy} if $X$ is properly embedded in $Y$ at each point of time. The isotopy is \defn{relative to $\boundary X$} if, for every $t$, we have $f(\boundary X, t)  = \boundary X$.

A \defn{knot} is a smooth simple closed curve embedded  in $S^3 = \R^3 \cup \{\infty\}$ and a link is the union of 1 or more pairwise disjoint knots. Two links are \defn{equivalent} if there is an (ambient) isotopy taking one to the other. This is an equivalence relation on links and a link's equivalence class is known as its \defn{type}.  A \defn{knot invariant} is an algebraic object (e.g. a number, vector space, polynomial, or group) associated to knot types. The knot invariants in this paper will all be functions from a subset of the set of knot types to the integers.

In this paper, a \defn{surface} will always be the result of removing a finite set (possibly the empty set) of points from the interior of a compact, orientable 2-dimensional manifold $F$. We will call the points removed from $F$ \defn{punctures}. If the surface is a subset of $S^3$, we will assume that it is smooth.

If $X$ and $Y$ are two surfaces in $S^3$, two smooth simple closed curves in a surface, or a surface and a knot in $S^3$, then we will always assume that $X$ and $Y$ intersect transversally. This implies that they have no points of tangency and the intersection $X \cap Y$ is a compact submanifold of the appropriate dimension. (If $X$ and $Y$ are surfaces in $S^3$, then $X \cap Y$ is the union of finitely many simple closed curves; if $X$ and $Y$ are simple closed curves in a surface then $X \cap Y$ is the union of finitely many points; if $X$ is a surface and $Y$ is a knot in $S^3$, then $X \cap Y$ is also the union of finitely many points.) If $Y$ is a subsurface of $X$, we will let $X \setminus Y$ denote the complement of the interior of $Y$, so that $X \setminus Y$ is either empty or is a surface.

An \defn{arc} $\alpha$ in a surface $F$ is a compact, properly embedded, connected, smooth 1-manifold with non-empty boundary (necessarily homeomorphic to the interval $[0,1]$). A \defn{curve} in $F$ is a smooth simple closed curve in $F$ disjoint from $\boundary F$. If $\alpha$ and $\beta$ are arcs or curves on a surface $F$, we let $i(\alpha,\beta)$ denote the minimum of $|\alpha' \cap \beta'|$ where $\alpha'$ and $\beta'$ range over all arcs or simple closed curves which are (ambiently) isotopic in $F$ to $\alpha$ and $\beta$ respectively and which are transverse to each other. If $\alpha$ and $\beta$ are the unions of arcs and curves in a surface $F$, then they intersect \defn{minimally}, if they are transverse and if for each component $\alpha' \subset \alpha$ and $\beta' \subset \beta$, we have $|\alpha' \cap \beta'| = i(\alpha',\beta')$.

Finally, throughout the paper, we will use without comment basic facts about simple closed curves on surfaces. The paper \cite{Epstein} and the book \cite{FM} are good sources for these facts. For example, we will use the (smooth) Sch\"onflies Theorem which states that every simple closed curve on a sphere bounds a disc on both sides. One other fact deserves particular attention (see \cite[Lemma 3.3]{FM}): if $\gamma_1, \hdots, \gamma_{n}$ are curves or arcs in a surface $F$ which pairwise intersect minimally and if $\gamma_{n+1}$ is another curve or arc, then there is an (ambient) isotopy of $\gamma_{n+1}$ in $F$ so that $\gamma_{n+1}$ is transverse to each of $\gamma_1, \hdots, \gamma_n$ and intersects them each minimally.

\section{Crossing Changes}

Changing a crossing is one of the most basic methods of changing the knot type of a knot $K$. One way of specifying a crossing change of $K$ is to choose some diagram for $K$ (not necessarily a diagram with the fewest possible crossings), choose a crossing in the diagram, and reverse the over and under strands, as in Figure \ref{SimpleCC}. Equivalently, we may choose a diagram of $K$ in which there are two parallel strands as on the left of Figure \ref{DiagramCC}, we then pull one strand over the other, as in the middle of Figure \ref{DiagramCC}, introducing two crossings, and, finally we reverse the over, under strands of one of the crossings, as on the right of Figure \ref{DiagramCC}.  For more details, on crossing changes in diagrams, see \cite[Section 3.1]{Adams}.

\begin{figure}[h!]
\centering
\includegraphics[scale = .5]{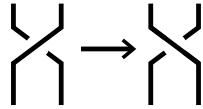}
\caption{A typical local picture of a crossing change}
\label{SimpleCC}
\end{figure}

\begin{figure}[h!]
\centering
\includegraphics[scale = .5]{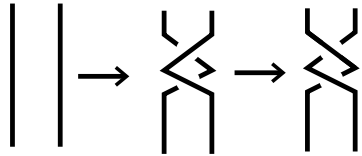}
\caption{A different diagrammatic depiction of a crossing change}
\label{DiagramCC}
\end{figure}

As intuitive as these diagrammatic descriptions are, a definition of crossing change which allows the knot to remain in 3-dimensional space will be more useful to us.

\begin{definition}
Suppose that $K \subset S^3$ is a knot. A \defn{crossing disc} for $K$ is a smoothly embedded disc $D \subset S^3$, such that $D$ intersects $K$ transversally in precisely two interior points of the disc. The boundary of the disc is called a \defn{crossing link} for $K$. A \defn{crossing change on $K$ using $D$} is a smooth homeomorphism $h \co S^3 \to S^3$ obtained (informally) by cutting $S^3$ open along $D$, twisting one of the resulting copies of $D$ by $2\pi$ and then re-gluing, as in Figure \ref{Twisting disc}. This converts the knot $K$ into a knot $K'$ which we say is \defn{obtained by a (topological) crossing change} on $K$ using $D$. Clearly, $K'$ may have a different type than $K$.
\end{definition}

A crossing change is a specific instance of ``Dehn surgery'' - a popular and well-studied operation in low-dimensional topology. In particular, a crossing change is a $\pm 1$ Dehn surgery on the crossing link for $K$. See, for example, \cite[Chapter 9]{Rolfsen}.

\begin{figure}[h!]
\centering
\includegraphics[scale = .5]{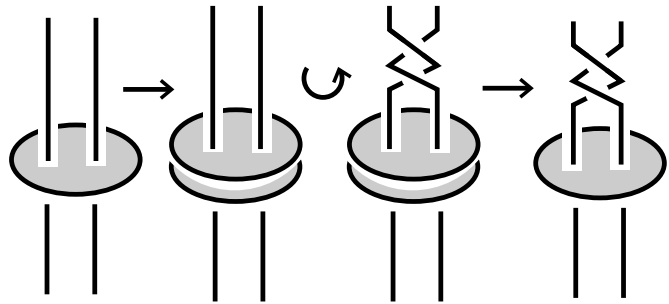}
\caption{A crossing change effected by twisting about a disc.}
\label{Twisting disc}
\end{figure}

\newpage

Here are some elementary facts concerning crossing changes:
\begin{itemize}
\item There are two possible choices for twisting the disc $D$ before regluing. Typically one of them is called a $+1$ crossing change along $D$ and the other is a $-1$ crossing change. Given an orientation of the crossing disc, there is an established convention for determining which is which, but we will not need it for this paper.
\item Each topological crossing change can be realized as a diagrammatic crossing change. (The proof is similar to the discussion on \cite[page 58]{Adams}.)
\item The knot type of $K'$ depends on only on the link type of $K \cup \boundary D$ and the direction of the twist.
\end{itemize}

\section{Bridge Spheres}

\subsection{Basic definitions and properties}

Informally, a bridge sphere is a sphere which cuts a knot into unknotted pieces. To formalize this, we need need to define trivial tangles:

\begin{definition}
A \defn{trivial tangle} $(B, \kappa)$ is a 3-ball $B$ containing properly embedded arcs $\kappa$ such that, fixing the endpoints of $\kappa$, we may isotope $\kappa$ into $\boundary B$. We also say that the arcs $\kappa$ are \defn{unknotted} in $B$. Figure \ref{TrivialTangle} shows a prototypical trivial tangle with $|\kappa| = 3$.
\end{definition}

\begin{figure}[h!]
\centering
\includegraphics[scale = .5]{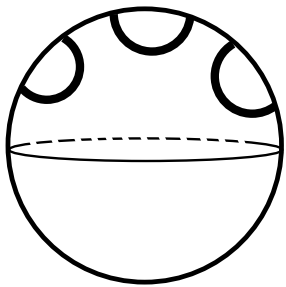}
\caption{A trivial tangle $(B,\kappa)$ with $|\kappa| = 3$. The arcs $\kappa$ are drawn with a thicker line.}
\label{TrivialTangle}
\end{figure}

Trivial tangles are, up to homeomorphism of pairs, determined completely by $|\kappa|$. We record this fact in the next lemma, whose proof we omit. (It can be proved by induction on $|\kappa|$ along the lines of Lemma \ref{Disjt Homeo}, below.)

\begin{lemma}\label{Pair Homeo}
If $(B, \kappa)$ and $(B',\kappa')$ are both trivial tangles with $|\kappa| = |\kappa'|$, then there is a homeomorphism of pairs $(B,\kappa) \to (B', \kappa')$.
\end{lemma}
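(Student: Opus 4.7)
The plan is to induct on $n = |\kappa|$, reducing to the classical fact that any homeomorphism of $S^2$ extends to a homeomorphism of the 3-ball it bounds (the Alexander trick). For $n = 0$, that classical fact is precisely the statement. For the inductive step, assume the result for tangles with $n-1$ arcs and let $(B,\kappa)$ be a trivial tangle with $|\kappa| = n \geq 1$; fix a component $\alpha \subset \kappa$.

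The core construction is to locate a \emph{bridge disc} for $\alpha$ disjoint from the rest of the tangle: a smoothly embedded disc $D \subset B$ with $\partial D = \alpha \cup \beta$ for some arc $\beta \subset \partial B$, such that $D \cap \kappa = \alpha$. Some bridge disc exists because $\alpha$ is unknotted: the track of an isotopy carrying $\alpha$ into $\partial B$ sweeps out such a disc after a generic perturbation to remove self-intersections. To make $D$ disjoint from the other arcs of $\kappa$, put $D$ in general position so that $D \cap (\kappa \setminus \alpha)$ is a finite collection of transverse points, then eliminate these intersections one at a time by innermost-disc / outermost-arc surgeries of $D$ against bridge discs of the remaining arcs. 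I expect this disjointness step to be the main technical obstacle; it is the sort of argument common in low-dimensional topology, and its template is presumably supplied by Lemma \ref{Disjt Homeo}. Perform the analogous construction in $(B',\kappa')$ to obtain a bridge disc $D'$ for some $\alpha' \subset \kappa'$.

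Let $N$ and $N'$ be regular neighborhoods of $D$ and $D'$ in $B$ and $B'$; each is a 3-ball meeting $\partial B$ (respectively $\partial B'$) in a disc and containing precisely the arc $\alpha$ (respectively $\alpha'$). The closures $B_0 = \cls(B \setminus N)$ and $B_0' = \cls(B' \setminus N')$ are 3-balls containing the reduced tangles $\kappa \setminus \alpha$ and $\kappa' \setminus \alpha'$. A short check, again using innermost/outermost surgeries, shows the reduced pairs are themselves trivial tangles, so the inductive hypothesis provides a homeomorphism of pairs $\phi_0 \co (B_0, \kappa \setminus \alpha) \to (B_0', \kappa' \setminus \alpha')$. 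After an adjustment on $\partial B_0$ (using that any orientation-preserving homeomorphism of a disc is isotopic rel boundary to the identity), we may assume $\phi_0$ carries the gluing disc $\partial N \cap B_0$ onto $\partial N' \cap B_0'$. Finally, both $(N,\alpha)$ and $(N',\alpha')$ are products $D \times [-1,1]$ and $D' \times [-1,1]$ with the arc sitting as the core of $D \times \{0\}$, so the restriction of $\phi_0$ to the common gluing disc extends across $N$ to a homeomorphism $(N,\alpha) \to (N', \alpha')$ via the product structure. Gluing the two homeomorphisms along $\partial N \cap B_0$ yields the desired $(B,\kappa) \to (B',\kappa')$, completing the induction.
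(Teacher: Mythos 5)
Your proposal follows exactly the route the paper recommends: the paper omits the proof of Lemma \ref{Pair Homeo}, remarking only that it ``can be proved by induction on $|\kappa|$ along the lines of Lemma \ref{Disjt Homeo},'' and that is precisely what you do --- induct on the number of strands, split off one strand with a bridge disc, apply the inductive hypothesis to the complementary ball, and glue back.

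Two small points worth tightening. First, the disjointness step is easier than you anticipate. The definition of a trivial tangle gives (as the paper asserts in the proof of Lemma \ref{Disjt Homeo}) a \emph{complete} pairwise-disjoint system $\Delta$ of bridge discs, one for each arc of $\kappa$; simply take $D$ to be the member of $\Delta$ corresponding to $\alpha$, and it is already disjoint from $\kappa \setminus \alpha$ and from the remaining discs of $\Delta$. This also gives the ``short check'' for free: the other discs of $\Delta$ survive into $B_0$ and witness that $(B_0, \kappa\setminus\alpha)$ is trivial. Your route of starting from an arbitrary bridge disc for $\alpha$ and surgering it off $\kappa\setminus\alpha$ can be made to work, but note that $D\cap(\kappa\setminus\alpha)$ is a set of points, not curves, so ``innermost-disc'' is not literally the move; one would instead surger $D$ against the other bridge discs to remove arcs and circles of intersection, then observe that disjointness from those discs forces disjointness from the arcs they subtend --- more work for no gain.

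Second, your description of $(N,\alpha)$ as $D\times[-1,1]$ with ``the arc sitting as the core of $D\times\{0\}$'' is a little off: $\alpha$ is a sub-arc of $\partial D$, not the core of $D$. What you actually need, and what is true, is that the triple $(N,\alpha,E)$ --- a $3$-ball, a properly embedded unknotted arc, and a distinguished disc in $\partial N$ disjoint from $\alpha$ --- is unique up to homeomorphism, and that any homeomorphism $E\to E'$ extends to a homeomorphism of triples $(N,\alpha,E)\to(N',\alpha',E')$; one extends across $\partial N\setminus E$ so as to carry $\partial\alpha$ to $\partial\alpha'$ and then across $N$ preserving $\alpha$ using the bridge disc $D$ and the Alexander trick. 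With that phrasing the gluing step closes cleanly, and the induction is complete.
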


If $(B,\kappa)$ and $(B', \kappa')$ are both trivial tangles with $|\kappa| = |\kappa'|$, we may construct a knot or link $K$ in $S^3$ by choosing a homeomorphism of pairs
\[h\co (\boundary B, \boundary \kappa) \to (\boundary B', \boundary \kappa')\] and then using the homeomorphism to glue the boundaries of the 3-balls and the arcs together:
\[
(S^3, K) = (B \cup_h B', \kappa\cup_h \kappa').
\]

It turns out that every knot type in $S^3$ can be built in such a way.  To see this, we recall that every knot has a diagram in the plane which is the plat closure of a braid, as in Figure \ref{Plat}. That is, with respect to projection onto a vertical line, the diagram has $b$ maxima and $b$ minima with all the maxima above all the minima. In between the maxima and minima, the strands of the knot are monotonically increasing or decreasing. We think of the knot $K$ itself as lying in the plane of projection except at the crossings where it dips slightly in front of and slightly behind the plane. A line in the plane of projection which separates the maxima from the minima can be extended orthogonally from the plane to a plane $P$ separating the maxima from the minima, shown as a thick dashed line in Figure \ref{Plat}. Adding a point at infinity to $\R^3$ produces the 3-sphere $S^3$ and converts our plane to a 2-sphere, which we continue to denote by $P$.  The bridge sphere $P$ separates $S^3$ into two 3-balls. The strands of $K$ lying inside those 3-balls can be isotoped, keeping their endpoints fixed into $P$, since each arc of $K\setminus P$ has only a single maximum or minimum. We formalize an equivalent construction in Definition \ref{Def: Bridge}. For a different perspective on bridge number see \cite[Section 3.2]{Adams}.

\begin{figure}[h!]
\centering
\includegraphics[scale = .5]{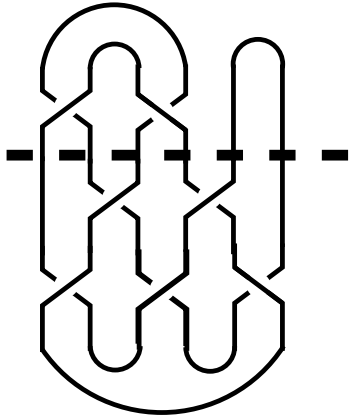}
\caption{A knot as a plat closure.}
\label{Plat}
\end{figure}

\begin{definition}\label{Def: Bridge}
Let $K \subset S^3$ be a knot. A \defn{bridge sphere} $P \subset S^3$ for $K$ is a 2-sphere transverse to $K$ and separating $S^3$ into two 3-balls, $B_1$ and $B_2$, such that $(B_1, K \cap B_1)$ and $(B_2, K \cap B_2)$ are both trivial tangles. We consider the points $K \cap P$ to be punctures on $P$ (obfuscating the difference between $P$ and $P \setminus K$). We also say that
$(K,P)$ is a \defn{bridge position} for $K$.

We arbitrarily choose one of $B_1$ or $B_2$ to be called the side ``above'' $P$ and the other to be the side ``below'' $P$. Two bridge positions $(K,P)$ and $(J,Q)$ are \defn{isotopically equivalent} if there is an isotopy of $S^3$ which takes $K$ to $J$ and $P$ to $Q$. The \defn{bridge number} of $(K,P)$ is $b(K,P) = |P \cap K|/2$. The \defn{bridge number} of $K$ is
\[
\b(K) = \min_P \{ b(K,P) : P \text{ is a bridge sphere for } K\}.
\]
A bridge position $(K,P)$ with $b(K,P) = \b(K)$ is said to be a \defn{minimal bridge position} with $P$ a \defn{minimal bridge surface}.
\end{definition}

If our bridge position $(K,P)$ arises from a plat diagram of $K$, we note that $b(K,P)$ is equal to both the number of maxima of $K$ and the number of minima of $K$. The bridge number $\b(K)$ of $K$ is an important and useful knot invariant. In particular, if $J$ and $K$ are knots representing the same knot type, then $\b(J) = \b(K)$.

The following elementary observations may be helpful for the reader who has not previously encountered bridge spheres for knots:
\begin{lemma}[Elementary Observations]
The following are true:
\begin{itemize}
\item If $K$ is the plat closure of a braid on $2b$ strands, then $\b(K) \leq b$.
\item If $K$ has a planar diagram with $c$ crossings, then $\b(K) \leq c$.
\item $K$ is the unknot if and only if $\b(K) = 1$.
\end{itemize}
\end{lemma}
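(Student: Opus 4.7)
The plan is to verify the three bullets in turn by direct construction, using only the definitions and Sch\"onflies.

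For the first bullet, I would take the bridge sphere produced in the discussion preceding Definition~\ref{Def: Bridge}. Given the plat closure of a braid on $2b$ strands in the plane of projection, extend a line separating the $b$ maxima from the $b$ minima orthogonally to a plane in $\R^3$ and compactify to obtain $P \subset S^3$. Each arc of $K \setminus P$ has only a single extremum and both endpoints on $P$, so it isotopes rel endpoints into $P$. Both sides of $P$ are therefore trivial tangles, so $(K,P)$ is a bridge position with $b(K,P) = b$ and hence $\b(K) \leq b$.

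For the second bullet (taking $c \geq 1$, the case $c = 0$ being covered by the third bullet), I would realize the diagram in $\R^3$ by placing its underlying $4$-valent graph in a horizontal plane $\Pi$ and perturbing the over-strand at each of the $c$ crossings into a small semicircular bump above $\Pi$. Taking $P$ to be the one-point compactification of a horizontal plane lying above $\Pi$ but below the tops of the $c$ bumps gives $|K \cap P| = 2c$. The upper ball contains $c$ disjoint bump-arcs, each with a single maximum, which is visibly a trivial tangle; and the lower ball contains the complementary arcs, which deformation-retract rel endpoints onto $\Pi$ and from there into $P$, so the lower side is also a trivial tangle. Hence $\b(K) \leq c$.

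For the third bullet, I would handle each direction separately. If $K$ is the unknot, realize it as a round circle in $\R^3$ and bisect it by a plane through a diameter to obtain a bridge sphere with $b(K,P) = 1$; combined with $\b(K) \geq 1$ (since $K$ is non-empty), this gives $\b(K) = 1$. Conversely, if $\b(K) = 1$, then a bridge sphere $P$ meets $K$ in exactly two points and cuts $K$ into two arcs, each sitting in a trivial one-arc tangle and therefore isotopic rel endpoints into $P$. Consequently $K$ itself is isotopic in $S^3$ to a simple closed curve on $P \cong S^2$, which bounds a disc in $P$ by Sch\"onflies, showing $K$ is the unknot.

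The only real content is in the second bullet, where one must verify the \emph{trivial tangle} condition on both sides rather than simply count intersection points; the upper side is obvious, while the lower side requires the brief deformation-retraction argument above. Everything else is a direct application of the definition of bridge position and of Sch\"onflies.
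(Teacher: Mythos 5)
The paper states this lemma without proof, presenting it only as a set of warm-up observations for the reader, so there is no official argument to compare against; I will simply assess your proposal on its own terms.

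Your arguments for all three bullets are correct and use the standard constructions. For the plat-closure bound, the bridge sphere you describe is exactly the one sketched in the paper's discussion preceding Definition~\ref{Def: Bridge}. For the crossing-number bound, the key point is indeed to check the trivial-tangle condition on the lower side, and your argument is right: after the over-strand at each crossing is popped up into a bump, what remains below the slicing sphere is a disjoint union of arcs whose vertical projections to $P$ are embedded and pairwise disjoint, so each can be pushed monotonically up into $P$ rel endpoints. The count $|K\cap P| = 2c$ then gives $b(K,P)=c$.

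The only place you gloss over something is the converse direction of the third bullet. Isotoping each of the two arcs of $K\setminus P$ into $P$ \emph{independently} would make them overlap along a common arc (they share their two endpoints and there is only one isotopy class of arc in a twice-marked sphere), so $K$ would not literally end up as a simple closed curve on $P$. Two quick fixes: (a) each arc $\kappa_i$ of $K\setminus P$ cobounds a disc $D_i$ in its ball with an arc $\alpha_i\subset P$; isotope $\alpha_2$ rel endpoints to coincide with $\alpha_1$, carry $D_2$ along, and then $D_1\cup D_2$ (smoothed along $\alpha_1$) is an embedded disc with boundary $K$; or (b) isotope $\kappa_1$ into $P$ and $\kappa_2$ into a collar $P\times[0,\epsilon]$ of $P$, so that $K$ lies in a parallel copy of $P$ and bounds a disc by Sch\"onflies. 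Either patch is minor and the conclusion stands; the rest of your proof is fine as written.
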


One of the reasons we we need to define $\b(K)$ as a minimum, is that if a knot has a bridge position $(K,P)$ then we can create another bridge position $(K,Q)$ with $b(K,Q) = b(K,P) + 1$. We isotope $P$, outside a neighborhood of the punctures, to a sphere $Q$ so that a small disc of $P$ passes through $K$ in such a way that the strands of $Q \setminus K$ are still unknotted, as in Figure \ref{Perturb}. We say that $(K,Q)$ (and any bridge position isotopically equivalent to $(K,Q)$) is a \defn{perturbation} of $(K,P)$.

\begin{figure}[h!]
\centering
\labellist \small\hair 2pt
\pinlabel $P$ [r] at 2 306
\pinlabel $K$ [b] at 112 362
\pinlabel $Q$ [r] at 363 311
\pinlabel $K$ [b] at 472 365
\pinlabel $P$ [r] at 9 106
\pinlabel $K$ [b] at 119 162
\pinlabel $Q$ [r] at 363 111
\pinlabel $K$ [b] at 472 187
\endlabellist
\includegraphics[scale = .5]{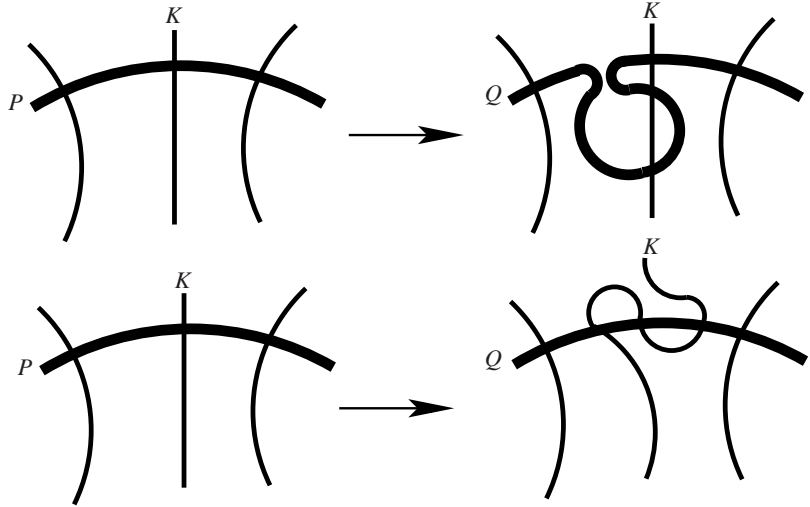}
\caption{The top row shows the perturbation of a bridge sphere $P$. If we fix $P$ and isotope the knot we have the result shown in the bottom row.}
\label{Perturb}
\end{figure}

Although there are evidently many ways of perturbing a bridge sphere $P$, it turns out that any two perturbations of $(K,P)$ are isotopically equivalent.

Given a diagram for $K$ which is a plat closure, it is easy, as we remarked above, to find an upper bound for $\b(K)$. Finding a lower bound for $\b(K)$ is not nearly as easy. For instance, there are knots \cite{IS} with unperturbed non-minimal bridge spheres. The introduction of another invariant, the bridge distance of a knot, will provide us the needed lower bound.

\subsection{The curve complex and bridge distance}

To define the bridge distance of a knot, we make a temporary detour from knot theory and turn to one of the most important constructions in the study of surfaces: the curve complex. For basic results concerning the curve complex, we refer to the marvelous text by Farb and Margalit \cite{FM}.

Let $P$ be a surface (possibly with punctures). A curve $\gamma \subset P$ is \defn{essential} if it does not bound a disc in $P$ containing no punctures or exactly one puncture and does not cobound an annulus with a component of $\boundary P$.  Two essential arcs or curves $\gamma$ and $\gamma'$ in $P$ are \defn{in the same isotopy class} if there is an ambient isotopy of the surface $P$  which takes $\gamma$ to $\gamma'$. If $\gamma$ and $\gamma'$ are arcs, we require that the isotopy be a proper isotopy. We let $[\gamma]$ denote the isotopy class of the curve $\gamma$.

Let $\mc{C}(P)$ (the \defn{curve complex} of $P$) be the graph whose vertex set is the set of isotopy classes of essential curves on $P$ and whose edge set consists of pairs $([\alpha], [\beta])$ of distinct isotopy classes such that $i(\alpha, \beta) = 0$. If $P$ is a sphere with three or fewer punctures, then $\mc{C}(P)$ is empty. If $P$ is a four-punctured sphere or a torus with no punctures, then $\mc{C}(P)$ has countably many vertices and no edges. Otherwise, $\mc{C}(P)$ is infinite and connected (as shown by Lickorish \cite[Theorem 4.3]{FM}). Henceforth, we only consider such surfaces. We turn $\mc{C}(P)$ into a metric space with metric $d$ by declaring each edge to have length one. Thus, if $\gamma$ and $\gamma'$ are essential curves in $P$ and if $d([\gamma], [\gamma']) =  k$, then there is a list of essential curves
\[
\gamma = \gamma_0, \gamma_1, \hdots, \gamma_k = \gamma'
\]
such that for each $i \in \{1, \hdots, k\}$ the curves $\gamma_{i-1}$ and $\gamma_i$ can be isotoped to be disjoint. Figure \ref{ShortPath} shows a path of length 2 in the curve complex of a 6-punctured sphere.

\begin{figure}[h!]
\centering
\labellist \small\hair 2pt
\pinlabel $\gamma_0$ [br] at 11 56
\pinlabel $\gamma_1$ [t] at 116 57
\pinlabel $\gamma_2$ [bl] at 182 54
\endlabellist
\includegraphics[scale = .7]{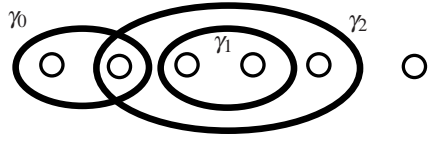}
\caption{The sequence $[\gamma_0], [\gamma_1], [\gamma_2]$ is a path of length 2 in the curve complex of a 6-punctured sphere.}
\label{ShortPath}
\end{figure}

More generally, if $A$ and $B$ are subsets of the vertex set of $\mc{C}(P)$ we define $d(A,B)$ to be the infimum of $d(a,b)$ over all vertices $a \in A$ and $b \in B$.  If $A \cap B \neq \nil$, then $d(A, B) = 0$. Otherwise, if $A \cap B = \nil$, the distance $d(A,B)$ is a natural number. We also remark that any homeomorphism $h \co P \to P$ induces a graph isomorphism $h_*\co \mc{C}(P) \to \mc{C}(P)$. Indeed, the curve complex is one of the principal tools for studying surface automorphisms. 

If $(B,\kappa)$ is a trivial tangle with $P = \boundary B \setminus \kappa$, we will be especially interested in the \defn{disc set} $\mc{D} \subset \mc{C}(P)$. This is the subset of the vertices of $\mc{C}(P)$ which can be represented by curves in $P$ bounding properly embedded discs in $B \setminus \kappa$. Note that if $(B, \kappa)$ is a trivial tangle and if $h \co (B, \kappa) \to (B, \kappa)$ is a homeomorphism of pairs, then the restriction of $h$ to $P$ induces a graph isomorphism of $\mc{C}(P)$ which takes $\mc{D}$ to itself. If $(K,P)$ is a bridge position, there are disc sets for the trivial tangles on either side of $P$. We denote the disc set for the tangle above $P$ by $\A(K,P)$ (or just $\A$ if the context makes the bridge position clear) and the disc set for the tangle below $P$ by $\B(K,P)$ (or just $\B$). The sets $\A$ and $\B$ are the \defn{disc sets} for the bridge position $(K,P)$.  

 We come now to the central tool of this paper:
 \begin{definition}
 The distance of a bridge position $(K,P)$ with $b(K,P) \geq 3$ is
 \[
 d(K,P) = d(\A,\B),
 \]
 where the distance on the right is the distance in the curve complex for $P$ between the disc sets for $(K,P)$.
 \end{definition}

\begin{remark}
If $(K, P)$ and $(J,Q)$ are isotopically equivalent bridge positions by an isotopy $h\co S^3 \to S^3$, then $h$ induces a graph isomorphism $\mc{C}(P) \to \mc{C}(Q)$. If, additionally, $h$ takes  the side above $P$ to the side above $Q$ (and, of course, the side below $P$ to the side below $Q$), then the induced graph isomorphism also takes $\A(K,P)$ to $\A(J,Q)$ and $\B(K,P)$ to $\B(J,Q)$. In particular, if $(K,P)$ and $(J,Q)$ are isotopically equivalent, then $d(K,P) = d(J,Q)$.
 \end{remark}

 One of the central theorems concerning bridge distance is provided by Tomova \cite[Theorem 10.3]{Tomova}.
 \begin{theorem}[Bridge number bounds distance]\label{Tomova}
 Suppose that $(K,P)$ and $(K, Q)$ are bridge positions for a knot $K \subset S^3$ such that  $(K,Q)$ is not obtained from $(K,P)$ by a (possibly empty) sequence of perturbations. Then
 \[
 d(K,P) \leq 2b(K,Q).
 \]
 \end{theorem}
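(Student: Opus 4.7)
The plan is to follow a sweepout argument in the style of Rubinstein--Scharlemann and Scharlemann--Tomova, constructing an explicit path in $\mc{C}(P \setminus K)$ from $\A(K,P)$ to $\B(K,P)$ of length at most $2b(K,Q)$. First I would realize $(K,Q)$ as a sweepout $\{Q_t\}_{t \in [-1,1]}$ of $S^3$ by bridge surfaces isotopic to $Q$, with $Q_0 = Q$ and $Q_{\pm 1}$ collapsing onto a spine (the knot arcs together with a dual graph) of the trivial tangle on the corresponding side of $Q$; and similarly realize $(K,P)$ as a sweepout $\{P_s\}_{s \in [-1,1]}$. I would place the two sweepouts in general position so that the tangencies between $P_s$ and $Q_t$ form a $1$-complex $\Gamma \subset [-1,1]^2$ (the Rubinstein--Scharlemann graphic), and so that the specific surfaces $P = P_0$ and $Q = Q_0$ meet transversely in a collection of simple closed curves in $P \setminus K$.

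Next I would fix $s = 0$ and track the essential components of $P \cap Q_t$ in $P \setminus K$ as $t$ runs from $-1$ to $1$. Near $t = -1$, each curve of $P \cap Q_t$ bounds a compressing disc in the collapsing $Q$-tangle; an innermost-disc argument on these discs (using that the other side of $P$ is a trivial tangle) produces a compressing disc for $P$ on one specific side, anchoring the starting curve in $\A(K,P)$. A symmetric argument at $t = 1$ anchors the terminal curve in $\B(K,P)$. Between critical times of the sweepout the essential-curve data on $P$ is constant up to isotopy. At each generic critical time the data changes only by a saddle tangency (which replaces a curve by one or two disjoint band-sums, or the reverse), by a curve passing through a puncture, or by a curve becoming inessential; in every case the essential curves on $P$ just before and just after the event may be taken disjoint and hence at curve-complex distance at most $1$. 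The number of such events is controlled by the $2b(K,Q)$ bridge-critical points of $K$ with respect to the $Q$-sweepout, yielding the desired bound $d(K,P) \leq 2b(K,Q)$.

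The main obstacle, and the step that requires the non-perturbation hypothesis, is ruling out the degenerate case in which no essential curve of $P \cap Q_t$ exists at some time, or in which the putative initial and terminal curves coincide or fail to lie in the correct disc sets. In that scenario one shows, by simultaneously arranging compressing discs for $P$ and for $Q$ disjointly on matching sides, that $K$ admits cancelling maximum--minimum pairs with respect to $P$ that exhibit $(K,Q)$ as a (possibly empty) sequence of perturbations of $(K,P)$, contradicting the hypothesis. A secondary technical difficulty is the local bookkeeping at tangencies: one must verify, via careful band-sum and innermost-disc analyses on $P_s \cap Q_t$, that the before-and-after essential curves are actually disjoint up to isotopy rather than merely transverse, and that the graphic $\Gamma$ can be perturbed to avoid higher-order degeneracies without inflating the count of critical events beyond $2b(K,Q)$.
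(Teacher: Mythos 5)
The paper does not actually prove Theorem~\ref{Tomova}; it is imported verbatim as \cite[Theorem 10.3]{Tomova}, so there is no in-paper proof for your attempt to be compared against.

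That said, a few remarks on your sketch, since Tomova's proof does live in the Rubinstein--Scharlemann sweepout world you describe, and the non-perturbation hypothesis does feed a dichotomy of roughly the kind you outline. Two of your key steps, however, do not hold up. First, the constant $2b(K,Q)$ is not obtained by counting critical events in a one-parameter sweep of $Q_t$ past the fixed sphere $P$: the number of tangencies in such a sweep is not bounded in terms of $b(K,Q)$, so this count cannot produce the estimate. In the actual argument the constant is an Euler-characteristic quantity attached to the punctured surface $Q_K = Q\setminus K$, namely $2b(K,Q) = 2 - \chi(Q_K)$, and the path in $\mc{C}(P\setminus K)$ from $\A$ to $\B$ is built from essential pieces of the intersection pattern with length controlled by the topology of $Q_K$, not by a tally of tangency events. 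Second, fixing $s=0$ and only moving $Q$ discards exactly the information the Rubinstein--Scharlemann graphic exists to provide: Tomova's proof genuinely needs the two-parameter picture to label regions of the complement of the graphic (according to which side of $P$ or $Q$ carries a compression) and then to analyze the possible configurations of labels. Restricting to a one-parameter slice leaves you unable to guarantee a persistent essential intersection and unable to extract the perturbation conclusion in the degenerate case --- precisely the two places your sketch flagged as ``the main obstacle.''
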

 This has the helpful corollary:
 \begin{corollary}[see {\cite[Corollary 10.6]{Tomova}}]\label{Cor: Tomova}
 Suppose that $(K,P)$ is a bridge position and that
 \[
 d(K,P) > 2b(K,P),
 \]
 then $\b(K) = b(K,P)$ and $(K,P)$ is the unique minimal bridge position for $K$.
 \end{corollary}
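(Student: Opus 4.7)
The plan is to derive Corollary \ref{Cor: Tomova} as a direct contrapositive consequence of Theorem \ref{Tomova}, together with the (already noted) fact that a single perturbation increases bridge number by exactly one.

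First I would argue that $(K,P)$ itself must be a minimal bridge position. Let $(K,Q)$ be any bridge position for $K$ with $b(K,Q) \leq b(K,P)$. If $(K,Q)$ were not obtainable from $(K,P)$ by a (possibly empty) sequence of perturbations, then Theorem \ref{Tomova} would give
\[
2b(K,P) < d(K,P) \leq 2b(K,Q),
\]
forcing $b(K,Q) > b(K,P)$, contradicting our choice of $(K,Q)$. Hence $(K,Q)$ is obtained from $(K,P)$ by some sequence of perturbations. Since each perturbation raises $b$ by one, the inequality $b(K,Q)\leq b(K,P)$ forces this sequence to be empty, i.e.\ $(K,Q)$ is isotopically equivalent to $(K,P)$. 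In particular, $\b(K)=b(K,P)$.

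Next I would use essentially the same dichotomy to get uniqueness. Suppose $(K,Q)$ is any minimal bridge position, so $b(K,Q)=\b(K)=b(K,P)$. The same application of Theorem \ref{Tomova} shows that $(K,Q)$ must be obtained from $(K,P)$ by a (possibly empty) sequence of perturbations, and since each perturbation strictly increases bridge number while $b(K,Q)=b(K,P)$, this sequence must be empty. Therefore $(K,Q)$ is isotopically equivalent to $(K,P)$, proving uniqueness.

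The proof is essentially one observation applied twice; the only conceptual point requiring attention is that the statement of Theorem \ref{Tomova} allows the ``sequence of perturbations'' to be empty, which is exactly what lets us conclude isotopic equivalence rather than just comparability via perturbations. No computation is required beyond comparing $b(K,P)$ and $b(K,Q)$, so there is no real obstacle here.
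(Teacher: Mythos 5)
Your argument is correct and is essentially the same as the paper's: both rely on the contrapositive of Theorem \ref{Tomova} combined with the observation that perturbations strictly increase bridge number, so any $(K,Q)$ with $b(K,Q)\leq b(K,P)$ that is not isotopically equivalent to $(K,P)$ cannot be reached by perturbations and would force $d(K,P)\leq 2b(K,P)$. The only cosmetic difference is that you present the dichotomy directly while the paper states it outright as a contrapositive; the logical content is identical.
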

 \begin{proof}
We prove the contrapositive. Suppose that $(K,Q)$ is a bridge position for $K$ with either $b(K,Q) < b(K,P)$ or with $b(K,Q) = b(K,P)$ but so that $(K,Q)$ and $(K,P)$ are not isotopically equivalent. Since perturbation only increases bridge number, $(K,Q)$ is not obtained from $(K,P)$ by a sequence of perturbations. Hence, by Theorem \ref{Tomova}, $d(K,P) \leq 2b(K,Q) \leq 2b(K,P)$.
 \end{proof}

Similarly, it follows easily from Theorem \ref{Tomova}, that for a knot $K \subset S^3$, the set \[\{d(K,P) : (K,P) \text{ is a bridge position }\}\] is finite. Thus we can finally define our knot invariant:
\begin{definition}\label{Def: bridge distance}
The \defn{bridge distance} of a knot $K \subset S^3$ with $\b(K) \geq 3$, is
\[
\d(K) = \max_P \{d(K,P) : (K,P) \text{ is a bridge position and } b(K,P) = \b(K)\}.
\]
\end{definition}

We can also obtain a knot invariant by replacing the maximum with a minimum. We choose the maximum as the existence of a high distance bridge sphere for a knot has strong implications for the topological properties of the knot (see, for instance \cite{MHL1}.) For a given knot it can be quite difficult to compute the bridge distance. It is known, however, that knots of arbitrarily high bridge distance (for fixed bridge number) exist \cite{BTY, IS}. Johnson and Moriah \cite{JM} recently constructed a very large class of knots of arbitrary bridge number for which the bridge distances not only go to infinity but can be calculated explicitly from a diagram.

\section{Paths emanating from the Disc Set}

The main challenge in proving Theorem \ref{Main Theorem} is to find an appropriate twisting curve to effect the crossing change. We will find such a curve in a bridge sphere for the knot by finding particular curves far away (in the curve complex) from the disc sets of the knot.

We begin by improving Lemma \ref{Pair Homeo}.

\begin{lemma}\label{Disjt Homeo}
Let $(B,\kappa)$ and $(B,\kappa')$ be two trivial tangles with $\boundary \kappa = \boundary\kappa'$. Let $P = \boundary B \setminus\kappa = \boundary B \setminus \kappa'$. Let $\gamma$ (respectively $\gamma'$) be an essential closed curve in $P$ bounding a disc in $B\setminus\kappa$ (respectively $B\setminus\kappa'$). Then there is a homeomorphism of pairs $h\co (B,\kappa) \to (B,\kappa')$ taking $\gamma$ to a curve disjoint from $\gamma'$.
\end{lemma}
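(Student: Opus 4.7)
The plan is first to find a compressing disc $D''$ for $(B,\kappa')$ whose boundary $\delta$ is disjoint from $\gamma'$ and partitions $\kappa'$ into the same arc-subset sizes as the disc $D\subset B\setminus\kappa$ with $\boundary D=\gamma$ partitions $\kappa$, and then to use Lemma \ref{Pair Homeo} to stitch together a pair-homeomorphism $h\co(B,\kappa)\to(B,\kappa')$ sending $D$ to $D''$, whence $h(\gamma)=\delta$ will be disjoint from $\gamma'$.

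Let $n=|\kappa|=|\kappa'|$ and fix $D'\subset B\setminus\kappa'$ with $\boundary D'=\gamma'$. Then $D$ and $D'$ partition their respective arc sets into sizes $\{a,n-a\}$ and $\{c,n-c\}$, all four integers lying in $\{1,\dots,n-1\}$ by essentiality of $\gamma,\gamma'$. After relabeling, $a\le n/2\le n-c$. Cut $(B,\kappa')$ along $D'$ into trivial tangles $(B'_1,\kappa'_1),(B'_2,\kappa'_2)$ with $|\kappa'_2|=n-c\ge a$; pick any $a$-element subset $T\subset\kappa'_2$, and within $(B'_2,\kappa'_2)$ choose a properly embedded disc separating $T$ from $\kappa'_2\setminus T$ (it exists because $(B'_2,\kappa'_2)$ is trivial). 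Pushing this disc slightly back into $B\setminus\kappa'$ produces $D''$, and $\delta=\boundary D''$ lies in the component of $P\setminus\gamma'$ on the $(n-c)$-side; thus $\delta$ is essential in $P$, $\delta\cap\gamma'=\nil$, and $D''$ partitions $\kappa'$ into sizes $a$ and $n-a$.

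To construct $h$, cut $(B,\kappa)$ along $D$ into trivial tangles $(B_1,\kappa_1),(B_2,\kappa_2)$ of sizes $a$ and $n-a$, and cut $(B,\kappa')$ along $D''$ into trivial tangles $(\widehat B_1,\widehat\kappa_1),(\widehat B_2,\widehat\kappa_2)$ of matching sizes. By Lemma \ref{Pair Homeo} each corresponding pair admits a pair-homeomorphism. The main obstacle is to choose these pair-homeomorphisms compatibly, so that they agree with one another along the cut disc and together realize a prescribed bijection $\boundary\kappa\to\boundary\kappa'$ sending $\boundary\kappa_i$ to $\boundary\widehat\kappa_i$; this requires the pair-automorphism group of a trivial tangle to be rich enough to realize any permutation of boundary punctures respecting the arc-endpoint pairing, a fact itself provable by induction on the number of arcs (parallel to the inductive proof of Lemma \ref{Pair Homeo}). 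Once the pieces are adjusted to match, gluing produces $h\co(B,\kappa)\to(B,\kappa')$ with $h(D)=D''$, so $h(\gamma)=\delta$ is disjoint from $\gamma'$ as required.
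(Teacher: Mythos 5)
Your proposal follows the same outline as the paper's proof: locate a compressing disc $D''\subset B\setminus\kappa'$ whose boundary is disjoint from $\gamma'$ and which partitions $\kappa'$ into the same sizes $\{a,n-a\}$ as $D$ partitions $\kappa$, then cut both tangles along the matching discs, apply Lemma~\ref{Pair Homeo} piece-by-piece, and glue. Your explicit construction of $D''$ (normalizing so $a\le n/2\le n-c$ and then choosing a sub-disc in the $(n-c)$-side) is a fine concrete version of what the paper does in one sentence.

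There is, however, a genuine gap at the step ``cut $(B,\kappa')$ along $D'$ into trivial tangles $(B_1',\kappa_1'),(B_2',\kappa_2')$'' (and again when you cut $(B,\kappa)$ along $D$, and when you push $D''$ back into $B$). That cutting a trivial tangle along a compressing disc disjoint from the strands yields two trivial tangles is \emph{not} immediate from the definitions, and it is precisely the content of the paper's opening innermost-disc argument: one takes a system of bridge discs $\Delta$ witnessing triviality of $(B,\kappa)$, runs a standard innermost-circle/outermost-arc isotopy argument to make $\Delta\cap D=\nil$, and then observes that the bridge discs survive the cut to witness triviality on each side. Your proof uses the conclusion of that argument twice without proving it, so it is incomplete as written. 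Separately, the gluing discussion at the end over-complicates the issue: you don't need the pair-automorphism group to realize an arbitrary prescribed permutation of $\boundary\kappa\to\boundary\kappa'$. Since $D$ and $D''$ are unpunctured discs in punctured spheres, any two such are isotopic (their complements carry all the punctures), so after precomposing with isotopies of the cut pieces you can make both $h_1$ and $h_2$ carry $D$ to $D''$, and then a further isotopy of one piece supported near $D''$ makes $h_1|_D=h_2|_D$, after which the union is the desired pair-homeomorphism.
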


The proof of this lemma is classic cut-and-paste 3-dimensional topology.

\begin{proof}
A \defn{bridge disc} for the trivial tangle$(B,\kappa)$ is a disc embedded in $B$, with interior disjoint from $\kappa$ and whose boundary is the union of a component of $\kappa$ with an arc in $\boundary B$. Since the arcs $\kappa$ can be isotoped to lie in $\boundary B$, there is a collection $\Delta$ of pairwise disjoint bridge discs such that every component of $\kappa$ forms part of the boundary of one disc in $\Delta$. Letting $D \subset B \setminus \kappa$ be the disc bounded by $\gamma$,  out of all such collections of bridge discs, choose $\Delta$ to minimize $|\Delta \cap D|$. We claim that $|\Delta \cap D| = 0$.

If not, $\Delta \cap D$ is the union of finitely many arcs and circles. Suppose that $\zeta \subset \Delta \cap D$ is a circle, chosen to be innermost on $D$. That is, $\zeta$ is the boundary of a disc $D' \subset D$ with interior disjoint from $\Delta$. The circle $\zeta$ also bounds a disc $E' \subset \Delta$ (whose interior may not be disjoint from $D$). Let $\Delta'$ be the result of swapping the disc $E'$ for the disc $D'$ in $\Delta$, as in Figure \ref{InnermostDisc} below. Then after an isotopy to smooth corners and make $\Delta'$ and $D$ transverse, we have $|\Delta' \cap D| < |\Delta \cap D|$, a contradiction. In a similar way, we can also show that $\Delta \cap D$ has no arcs of intersection. Hence, $\Delta \cap D = \nil$.

\begin{figure}[h!]
\centering
\labellist \small\hair 2pt
\pinlabel $\Delta$  at 317 372
\pinlabel $D$ [bl] at 535 388
\pinlabel $D'$ at 384 310
\pinlabel $K$ [b] at 228 400
\pinlabel $\Delta'$ at 341 121
\pinlabel $D$ [bl] at 543 146
\endlabellist
\includegraphics[scale = .5]{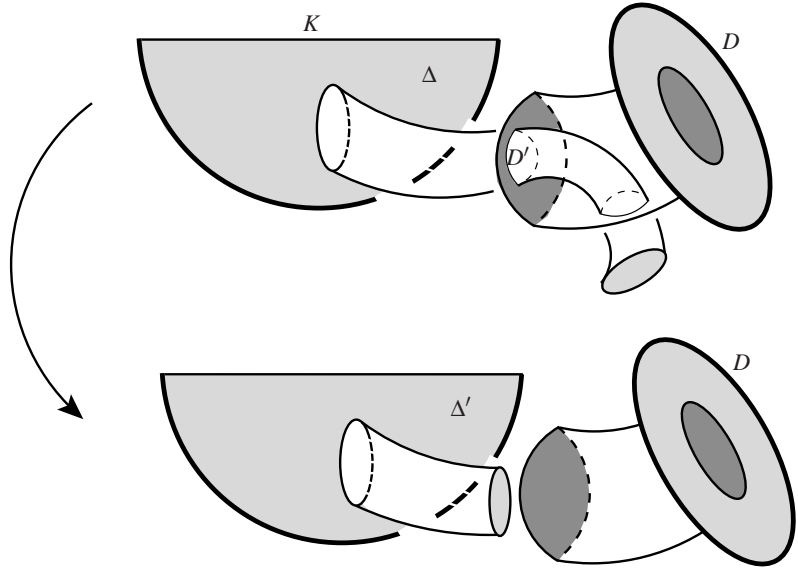}
\caption{A classic innermost disc argument: Doing a disc swap reduces the number of components of intersection between two discs.}
\label{InnermostDisc}
\end{figure}

Since the bridge discs $\Delta$ are disjoint from the disc $D$, cutting $(B,\kappa)$ open along the disc $D$, creates two new trivial tangles $(B_1, \kappa_1)$ and $(B_2, \kappa_2)$. We have $|\kappa_1| + |\kappa_2| = |\kappa|$. Without loss of generality, we may assume that $|\kappa_1| \leq |\kappa_2|$. The curve $\gamma'$  bounds a disc $D' \subset B$ which is disjoint from $\kappa'$. The curve $\gamma'$ also bounds two discs $E_1, E_2 \subset \boundary B$. Cutting $(B, \kappa')$ open along $D'$ produces two trivial tangles. If one of those trivial tangles contains $|\kappa_1|$ arcs and the other $|\kappa_2|$ arcs we let $\gamma'' = \gamma'$ and let $(B'_1, \kappa'_1)$ and $(B'_2, \kappa'_2)$ be the two trivial tangles, respectively. Otherwise, in the boundary of one of those trivial tangles, we can find an essential simple closed curve $\gamma'' \subset P$ which bounds a disc $D'' \subset B \setminus \kappa'$ which is disjoint from $D'$ and so that when we cut $(B, \kappa')$ open along $D''$ we get trivial tangles $(B'_1, \kappa'_1)$ and $(B'_2, \kappa'_2)$ with $|\kappa'_1| = |\kappa_1|$ and $|\kappa'_2| = |\kappa_2|$. Thus, by Lemma \ref{Pair Homeo}, there are homeomorphisms
\[\begin{array}{l}
h_1 \co (B_1, \kappa_1) \to (B'_1, \kappa'_1) \\
h_2 \co (B_2, \kappa_2) \to (B'_2, \kappa'_2).
\end{array}\]

By composing with isotopies of $B'_1$ and $B'_2$ supported outside neighborhoods of $\kappa_1$ and $\kappa_2$,  we may arrange also that the homeomorphisms take the remnant of the disc $D$ to the remnants of the disc $D''$.  The union $h_1 \cup h_2$ may then be extended to a homeomorphism $(B, \kappa) \to (B,\kappa')$ taking $\gamma$ to $\gamma''$.
\end{proof}

Blair, Tomova, and Yoshizawa showed that there exist curves in $\mc{C}(P)$ arbitrarily far from a single disc set. We use this result, together with the previous lemma, to construct curves that are far away from the disc set of a trivial tangle and which have a specified curve in the disc set either as the closest vertex in the disc set or adjacent to the closest vertex. This will be our starting point for finding a twisting curve.

\begin{lemma}\label{convexity}
Let $(B, \kappa)$ be a trivial tangle with $P = \boundary B \setminus\kappa$. Let $\mc{D} \subset \mc{C}(P)$ be the disc set. Choose $[\delta] \in \mc{D}$. Then for every $N \in \N$, there exists $\gamma \subset P$ such that $d([\gamma],\mc{D}) = N$ and $d([\gamma], [\delta]) \leq N + 1$.
\end{lemma}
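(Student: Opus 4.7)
The plan is to combine the Blair--Tomova--Yoshizawa result with Lemma \ref{Disjt Homeo} used as a homogeneity statement on the disc set $\mc{D}$. The key observation is that any self-homeomorphism of the pair $(B,\kappa)$ preserves $\mc{D}$ setwise and acts isometrically on $\mc{C}(P)$; so if we can find a candidate curve at distance $N$ from $\mc{D}$ but realizing that distance at some ``wrong'' vertex $[\delta']\in\mc{D}$, we can push the closest vertex onto one disjoint from $[\delta]$ by a pair-homeomorphism of the tangle.

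First I would invoke the Blair--Tomova--Yoshizawa theorem to obtain some curve $\gamma_0\subset P$ with $d([\gamma_0],\mc{D}) \geq N$. By traveling along a geodesic from $[\gamma_0]$ to $\mc{D}$ and stopping $N$ steps before the endpoint, I can replace $\gamma_0$ by a curve with $d([\gamma_0],\mc{D})$ equal to $N$ exactly (the triangle inequality prevents the distance from dropping below $N$ on such a geodesic segment). Let $[\delta']\in\mc{D}$ be a vertex realizing this minimum, so $d([\gamma_0],[\delta']) = N$.

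Next I apply Lemma \ref{Disjt Homeo} with both copies of the tangle equal to $(B,\kappa)$, taking the two input disc-bounding curves to be $\delta'$ and $\delta$. This produces a homeomorphism of pairs
\[
h\co (B,\kappa)\to (B,\kappa)
\]
such that $h(\delta')$ is disjoint from $\delta$. Because $h$ is a homeomorphism of pairs, it sends discs in $B\setminus\kappa$ to discs in $B\setminus\kappa$, and its restriction to $P$ therefore induces a simplicial automorphism $h_*$ of $\mc{C}(P)$ that maps $\mc{D}$ bijectively onto itself (as noted in the paragraph preceding the definition of the disc sets). Since $h_*$ is an isometry and preserves $\mc{D}$,
\[
d\bigl([h(\gamma_0)],\mc{D}\bigr) = d\bigl([\gamma_0],\mc{D}\bigr) = N.
\]

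Finally, set $\gamma = h(\gamma_0)$. The triangle inequality in $\mc{C}(P)$ gives
\[
d([\gamma],[\delta]) \leq d\bigl([\gamma],[h(\delta')]\bigr) + d\bigl([h(\delta')],[\delta]\bigr) \leq d\bigl([\gamma_0],[\delta']\bigr) + 1 = N+1,
\]
where the bound $d([h(\delta')],[\delta]) \leq 1$ is immediate from the disjointness guaranteed by Lemma \ref{Disjt Homeo} (the two curves are disjoint essential curves, hence either isotopic or joined by an edge in $\mc{C}(P)$). Combined with the equality $d([\gamma],\mc{D}) = N$, this yields the lemma. The substantive work is all done by Lemma \ref{Disjt Homeo} and the Blair--Tomova--Yoshizawa existence theorem; the only real idea is recognizing that the tangle's homeomorphism group acts transitively enough on $\mc{D}$ to let us rotate any nearest vertex into the neighborhood of $[\delta]$ without disturbing distances to $\mc{D}$.
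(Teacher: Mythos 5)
Your proof is correct and follows essentially the same approach as the paper: invoke the Blair--Tomova--Yoshizawa theorem to find a curve far from $\mc{D}$, truncate a geodesic to land at distance exactly $N$, then use Lemma \ref{Disjt Homeo} to produce a pair-homeomorphism of $(B,\kappa)$ carrying the nearest disc-vertex within distance $1$ of $[\delta]$ while preserving $\mc{D}$ and distances. The only cosmetic difference is that you spell out the triangle-inequality argument that the truncated geodesic vertex sits at distance exactly $N$ from $\mc{D}$, a step the paper leaves implicit.
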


Figure \ref{Fig: convexity} shows a schematic picture.

\begin{figure}[h!]
\centering
\labellist \small\hair 2pt
\pinlabel $[\gamma]$ [l] at 317 272
\pinlabel $\mc{D}$ at 186 62
\pinlabel $[\delta]$ [br] at 246 164
\pinlabel $N$ [l] at 302 218
\endlabellist
\includegraphics[scale = .5]{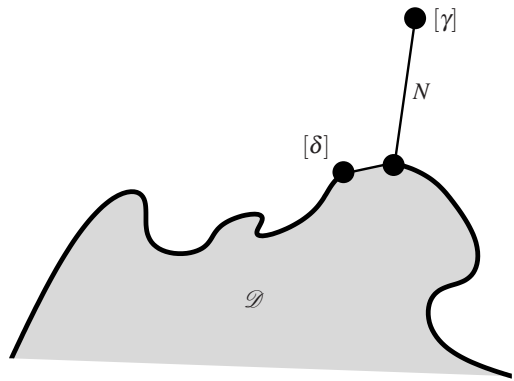}
\caption{Finding the curve $\gamma$. The shaded region represents the disc set for $(B, \kappa)$; each point in the disc set is an essential simple closed curve in $P\setminus \kappa$ which bounds a disc in $B \setminus \kappa$. The line segments represent paths in $\mc{C}(P)$.}
\label{Fig: convexity}
\end{figure}

\begin{proof}
By \cite[Cor. 4.10]{BTY}, there exists $[\ell'] \in \mc{C}(P)$ such that $d([\ell'], \mc{D}) \geq N$. Let $\alpha$ be a path of length $d([\ell'], \mc{D})$ in $\mc{C}(P)$ from $\mc{D}$ to $[\ell']$. There is a vertex $[\ell] \in \alpha$ such that $d([\ell], \mc{D}) = N$. Choose $[\delta'] \in \mc{D}$ to be a vertex such that $d([\delta'], [\ell]) = N$. By Lemma \ref{Disjt Homeo}, there is a homeomorphism $h\co (B,\kappa) \to (B,\kappa)$ such that for the induced map $h_*\co \mc{C}(P) \to \mc{C}(P)$ we have $d(h_*[\delta'], [\delta]) \leq 1$. Note also that $h_*(\mc{D}) = \mc{D}$. Let $[\gamma] = h_*[\ell]$. Hence,
\[
d([\gamma], \mc{D}) = d([\ell],\mc{D}) = N
\]
and, by the triangle inequality,
\[\begin{array}{rcl}
d([\gamma],[\delta]) &\leq& d([\gamma], h_*[\delta']) + d(h_*[\delta'], [\delta]) \\
&\leq & d([\ell], [\delta']) + 1 \\
&=& N + 1,
\end{array}
\]
as desired.
\end{proof}

We apply this to bridge spheres to produce curves arbitrarily far from two disc sets $\A$ and $\B$.

\begin{proposition}[Finding the First Curve]\label{FFC}
Fix $N \in \N$. Let $(K,P)$ be a bridge position for a knot $K$. Then there is a curve $\gamma \subset P$ such that $d([\gamma],\A \cup \B) \geq N$. Furthermore, we may choose $\gamma$ so that $\gamma$ bounds a twice-punctured disc in $P$.
\end{proposition}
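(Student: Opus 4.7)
The plan is to imitate the proof of Lemma \ref{convexity}, which exhibits curves far from a single disc set, but with the additional flexibility of Lemma \ref{Disjt Homeo} applied to both trivial tangles bordering $P$. I will first apply \cite[Cor.~4.10]{BTY} to $\A$ to obtain $[\ell'] \in \mc{C}(P)$ with $d([\ell'], \A) \geq 2N+2$, and then walk back along a geodesic from $\A$ to $[\ell']$ to find a vertex $[\ell]$ with $d([\ell], \A) = N$. Let $[\delta'] \in \A$ be a closest disc to $[\ell]$.

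The key new step is to choose a target $[\delta_A] \in \A$ that is itself at large distance from $\B$; concretely, $d([\delta_A], \B) \geq 2N+1$. Given such a $[\delta_A]$, Lemma \ref{Disjt Homeo} (applied to the upper tangle $(B_1, \kappa_1)$ with the roles of $\gamma$ and $\gamma'$ played by $[\delta']$ and $[\delta_A]$) produces a homeomorphism $h$ of the pair so that $h_*[\delta']$ is disjoint from $[\delta_A]$. Setting $\gamma := h_*[\ell]$, and using that $h$ preserves $\A$ and acts as an isometry of $\mc{C}(P)$, one computes $d([\gamma], \A) = d([\ell], \A) = N$ and $d([\gamma], [\delta_A]) \leq d([\ell], [\delta']) + 1 = N+1$; the triangle inequality then gives $d([\gamma], \B) \geq d([\delta_A], \B) - d([\gamma], [\delta_A]) \geq (2N+1) - (N+1) = N$, so $d([\gamma], \A \cup \B) \geq N$. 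Moreover, I would choose $[\delta_A]$ to be a twice-punctured bridge disc curve associated to an arc of $\kappa_1$; since the restriction of $h$ to $P$ is a homeomorphism and thus preserves the property of bounding a twice-punctured disc, if I arrange additionally that $[\ell]$ bounds a twice-punctured disc in $P$---either by refining BTY's construction to land on such a curve or by composing with a further homeomorphism from Lemma \ref{Disjt Homeo}---then $\gamma$ also bounds a twice-punctured disc, as required.

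The main obstacle is producing the target $[\delta_A] \in \A$ with $d([\delta_A], \B) \geq 2N+1$. A direct application of Lemma \ref{Disjt Homeo} does not suffice, since that lemma relates pairs of disc curves rather than a disc curve and an arbitrary curve in $\mc{C}(P)$. I would prove the existence of $[\delta_A]$ by a symmetric argument: invoke \cite[Cor.~4.10]{BTY} applied to $\B$ to obtain a curve very far from $\B$, and then use an iterated application of Lemma \ref{convexity} and Lemma \ref{Disjt Homeo} in the upper tangle to show that $\A$ contains twice-punctured bridge disc curves arbitrarily far from $\B$. Orchestrating this symmetric auxiliary argument alongside the main construction, so that the twice-punctured condition is maintained while all of the distance estimates close up, is the crux of the proof.
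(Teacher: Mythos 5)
Your proposal correctly mirrors Lemma~\ref{convexity} for the first half of the construction (obtaining $[\ell]$ with $d([\ell],\A)=N$, a nearest disc $[\delta']\in\A$, and then pushing forward by a homeomorphism $h$ from Lemma~\ref{Disjt Homeo} to control the distance to a target disc), and the inner triangle-inequality arithmetic is sound. However, there is a genuine gap at the step you yourself flag as the crux: the existence of a disc $[\delta_A]\in\A$ with $d([\delta_A],\B)\ge 2N+1$. Your proposed route to $[\delta_A]$ --- apply \cite[Cor.~4.10]{BTY} to $\B$ and then ``iterate Lemma~\ref{convexity} and Lemma~\ref{Disjt Homeo} in the upper tangle'' --- does not close. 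Lemma~\ref{convexity} produces \emph{vertices of $\mc{C}(P)$} far from a disc set, not \emph{discs} far from the opposite disc set; and Lemma~\ref{Disjt Homeo} relates two discs of the same tangle by a homeomorphism that preserves $\A$ but gives no control whatsoever on distances to $\B$, which is defined by the \emph{other} tangle and is not invariant under homeomorphisms of $(B_1,\kappa_1)$. So none of your cited tools can certify that $\A$ contains a vertex at prescribed distance from $\B$; this would be a nontrivial statement requiring a separate argument.

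The paper sidesteps the existence question entirely with a different use of the triangle inequality. It applies Lemma~\ref{convexity} once to the $\A$-tangle to get a vertex $[\ell]$ with $d([\ell],\A)=3(N+1)$. If $[\ell]$ is also far from $\B$, it is done. If $[\ell]$ is close to $\B$, then the disc $[\delta]\in\B$ nearest to $[\ell]$ is \emph{automatically} far from $\A$ (by the triangle inequality), and a second application of Lemma~\ref{convexity} --- now to the $\B$-tangle with $[\delta]$ as the prescribed disc --- yields a curve $[\gamma']$ with $d([\gamma'],\B)=N+1$ and $d([\gamma'],[\delta])\le N+2$, whence $d([\gamma'],\A)\ge d([\delta],\A)-d([\gamma'],[\delta])\ge N+1$. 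In other words, the ``far disc'' the argument needs is conjured for free from the first application of convexity, rather than being posited in advance. Finally, your handling of the twice-punctured-disc requirement (``refining BTY's construction'' or ``composing with a further homeomorphism from Lemma~\ref{Disjt Homeo}'') is not rigorous: neither of those moves is known to preserve the needed distance estimates while also landing on a twice-punctured-disc curve. The paper handles this cleanly by aiming for distance $N+1$ from both disc sets and then, if $\gamma'$ does not already bound a twice-punctured disc, replacing it by a disjoint curve that does, at the cost of one unit of distance.
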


Figure \ref{Fig: twoDiscSets} shows a schematic picture.

\begin{figure}[h!]
\centering
\labellist \small\hair 2pt
\pinlabel $[\gamma]$ [l] at 407 308
\pinlabel $\B$ at 179 95
\pinlabel $\A$  at 508 59
\pinlabel $N$ [br] at 341 255
\endlabellist
\includegraphics[scale = .45]{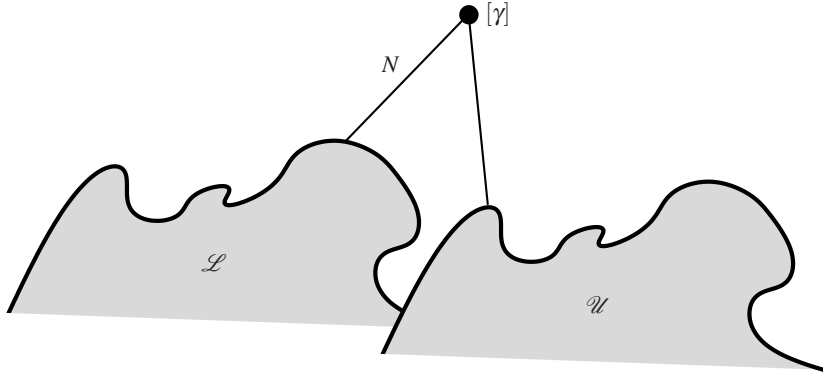}
\caption{We can find a curve $\gamma$ arbitrarily far from two disc sets.}
\label{Fig: twoDiscSets}
\end{figure}

\begin{proof}
First, suppose that we find a curve $\gamma'$ such that $d([\gamma'],\A \cup \B) \geq N + 1$. If $\gamma' \subset P$ bounds a twice-punctured disc, then set $\gamma = \gamma'$ and we are done. If it does not, $\gamma'$ cuts $P$ into two discs, each with at least 3 punctures. Let $\gamma \subset P$ be a curve in one of them bounding a twice-punctured disc in $P$. Then $d([\gamma],[\gamma']) = 1$ and so $d([\gamma], \A \cup \B) \geq N$, as desired. Thus, it suffices to find a curve $\gamma'$ such that $d([\gamma'],\A\cup \B) \geq N + 1$.

Figure \ref{Fig: geometry} indicates the vertices of $\mc{C}(P)$ appearing in the construction of $\gamma'$. By Lemma \ref{convexity}, there is $[\ell] \in \mc{C}(P)$ such that $d([\ell],\A) = 3(N+1)$.  If $d([\ell],\B) \geq N+1$, then by the definition of distance, we are done (after letting $\gamma' = \ell$). Assume, therefore, that $d([\ell],\B) < N+1$. Let $[\delta] \in \B$ be a vertex such that $d([\delta],[\ell]) = d([\ell],\B)$.  Observe that $d([\delta],\A) \geq 2(N+1) + 1$.

Applying Lemma \ref{convexity} again, we may find a vertex $[\gamma'] \in \mc{C}(P)$ such that $d([\gamma'],\B) = N+1$ and $d([\gamma'], [\delta]) \in \{N+1, N + 2\}$. Consequently,
\[\begin{array}{rcl}
d([\gamma'],\A) + (N + 2) &\geq& d([\gamma'],\A) + d([\delta],[\gamma']) \\
&\geq& d([\delta],\A ) \\
&\geq& 2(N+1) + 1.
\end{array}
\]
Hence, $d([\gamma'],\A \cup \B) \geq N+1$ which implies $d([\gamma'],\A \cup \B) = N+1$.
\end{proof}

\begin{figure}[h!]
\centering
\labellist \small\hair 2pt
\pinlabel $[\gamma']$ [r] at 141 311
\pinlabel $[\ell]$ [r] at 236 319
\pinlabel $\B$ at 186 98
\pinlabel $\A$ at 512 59
\pinlabel $3(N+1)$ [bl] at 365 189
\pinlabel $[\delta]$ [b] at 299 207
\pinlabel $N+1$ [tr] at 189 259
\endlabellist
\includegraphics[scale = .45]{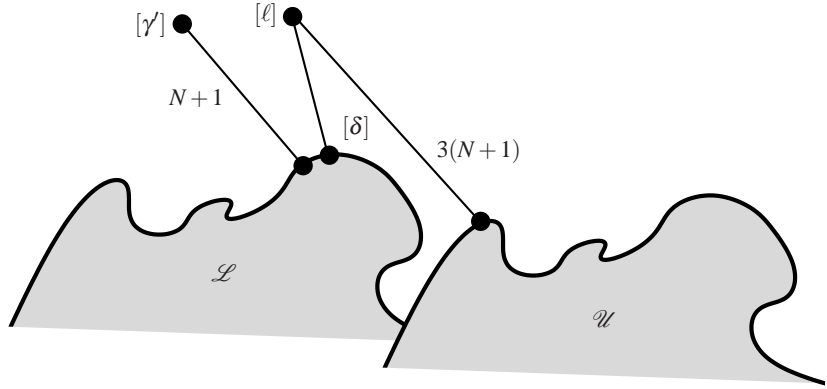}
\caption{Finding the curve $\gamma'$}
\label{Fig: geometry}
\end{figure}

\section{Subsurface Projections}

A separating curve in a surface $P$ cuts the surface into two components. We will also need a version of the curve complex for those components, which can be easily related to the curve complex for $P$.

\begin{definition}
Suppose that $F$ is a surface. An arc $\gamma \subset F$ is \defn{essential} if there does not exist an arc $\alpha \subset \boundary F$ such that $\alpha \cup \gamma$ bounds a disc in $F$. The \defn{arc-and-curve complex} for $F$ is a graph $\mc{AC}(F)$ with vertex set the set of isotopy classes of essential arcs and loops in $F$ and with an edge between two distinct vertices if they have representatives which are disjoint. As we did with the curve complex, we make $\mc{AC}(F)$ into a metric space with metric $d_F$ by declaring each edge to have length one. As usual, if $A$ and $B$ are subsets of the vertex set of $\mc{AC}(F)$, we let $d_F(A,B) = \min\{ d_F(a,b): a \in A, b \in B\}$.
\end{definition}

If $P$ is a surface and if $F \subset P$ is also a surface, any simple closed curve $\gamma$ in $P$ which is transverse to $\boundary F$ is either contained in $F$, or cuts through $F$ as a collection of arcs (as in Figure \ref{Fig: Projection}), or is disjoint from $F$.

\begin{figure}[h!]
\centering
\labellist \small\hair 2pt
\pinlabel $P$ [bl] at 7 7
\pinlabel $F$ [l] at 239 89
\pinlabel $\gamma$ [b] at 579 125
\endlabellist
\includegraphics[scale = 0.5]{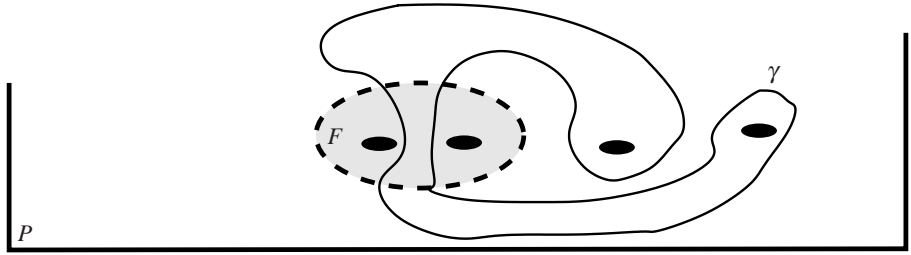}
\caption{The surface $P$ is a sphere with at least 4 punctures and the subsurface $F$ is a disc with at least 2 punctures. The curve $\gamma$ intersects $F$ in two arcs. }
\label{Fig: Projection}
\end{figure}

This observation allows us to define the so-called \defn{projection map} from $\mc{C}(P)$ to $\mc{AC}(F)$. The map is only useful for certain kinds of subsurfaces and we want to apply it to subsets of the vertex set of $\mc{C}(P)$ as we describe in the next definition.

\begin{definition}
Suppose that $P$ is a punctured surface and that $F \subset P$ is a connected subsurface with $\boundary F$ disjoint from the punctures. The subsurface $F$ is an \defn{essential subsurface} of $P$ if $\boundary F \neq \nil$ and if each component of $\boundary F$ is an essential curve in $P$. If $F \subset P$ is an essential subsurface and if $A$ is a set of vertices of $\mc{C}(P)$, we let $\pi_F(A)$ denote the set of vertices of $\mc{AC}(F)$ such that for each $[v] \in \pi_F(A)$ there is a vertex $[w] \in A$ and a simple closed curve $\alpha \in [w]$ such that $\alpha$ intersects each component of $\boundary F$ minimally and some component of $\alpha \cap F$ is a representative of $[v]$. The set $\pi_F(A)$ is called the \defn{subsurface projection} of $A$ onto $F$.
\end{definition}

\begin{remark}
In the example from Figure \ref{Fig: Projection}, $\gamma$ intersects $\boundary F$ minimally. The two components of $\gamma \cap F$ are parallel arcs and so represent the same vertex of $\mc{AC}(F)$ (which is, in fact, the only vertex of $\mc{AC}(F)$). Thus, $\pi_F([\gamma])$ is a single vertex in $\mc{AC}(F)$. If the curve $\gamma$ were disjoint from $F$, then $\pi_F([\gamma])$ would be empty.
\end{remark}

We use subsurface projections to define the ``subsurface distance'' between isotopy classes of curves in $P$.

\begin{definition}
Suppose that $P$ is a punctured surface and that $F \subset P$ is an essential subsurface. Suppose that $A$ and $B$ are subsets of vertices of $\mc{C}(P)$. If either $\pi_F(A)$ or $\pi_F(B)$ is empty, we define $d_F(A,B) = \infty$. Otherwise, we let $d_F(A,B) = d_F(\pi_F(A), \pi_F(B))$. Similarly, we let $\diam_F(A)$ denote the diameter of the set $\pi_F(A)$. That is, the supremum (which may be $\pm\infty$) of $\{d_F(v,w) : v, w \in \pi_F(A)\}$.
\end{definition}

The following lemma shows a very useful relationship between distance in $\mc{C}(P)$ and subsurface distance. We will use it many times in what follows.

\begin{lemma}\label{Minimal paths}
Suppose that $P$ is a sphere with at least 6 punctures. Let $F \subset P$ be an essential subsurface and let $A,B \subset \mc{C}(P)$. Then
one of the following occurs:
\begin{itemize}
\item $d_F(A,B) \leq d(A,B)$
\item Every minimal length path in $\mc{C}(P)$ from $A$ to $B$ includes the isotopy class of a simple closed curve disjoint from $F$.
\end{itemize}
\end{lemma}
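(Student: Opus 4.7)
The plan is to prove the contrapositive: assuming there exists a minimal length path
\[
\gamma_0,\gamma_1,\ldots,\gamma_k
\]
in $\mc{C}(P)$ realizing $d(A,B)=k$, with $[\gamma_0]\in A$, $[\gamma_k]\in B$, and with $\pi_F([\gamma_i])\neq\nil$ for every $i$, I will conclude that $d_F(A,B)\leq k$.

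First, using the simultaneous minimal position fact from \cite[Lemma 3.3]{FM} quoted in the introduction, I would isotope the family $\gamma_0,\ldots,\gamma_k$ together with the components of $\boundary F$ so that it pairwise intersects minimally. Consecutive curves in the path are joined by edges of $\mc{C}(P)$, so $i(\gamma_{i-1},\gamma_i)=0$ and minimal position makes $\gamma_{i-1}\cap\gamma_i=\nil$. Simultaneously each $\gamma_i$ meets $\boundary F$ minimally, so the components of $\gamma_i\cap F$ compute $\pi_F([\gamma_i])$ directly; a routine outermost-arc / innermost-disc argument (using the minimal position with $\boundary F$) shows that each surviving component of $\gamma_i\cap F$ is either a properly embedded essential arc or an essential simple closed curve of $F$, and hence genuinely represents a vertex of $\mc{AC}(F)$.

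The central step is then to show $d_F([\gamma_{i-1}],[\gamma_i])\leq 1$ for each $i$. By hypothesis both projections are nonempty, so I can pick $[v_{i-1}]\in\pi_F([\gamma_{i-1}])$ and $[v_i]\in\pi_F([\gamma_i])$ realized by components of $\gamma_{i-1}\cap F$ and $\gamma_i\cap F$ respectively. Since $\gamma_{i-1}$ and $\gamma_i$ are disjoint in $P$, these representatives are disjoint in $F$, so either $[v_{i-1}]=[v_i]$ or they span an edge of $\mc{AC}(F)$. Chaining this through the path and applying the triangle inequality in $\mc{AC}(F)$ gives
\[
d_F(A,B)\;\leq\;d_F([\gamma_0],[\gamma_k])\;\leq\;\sum_{i=1}^k d_F([\gamma_{i-1}],[\gamma_i])\;\leq\;k\;=\;d(A,B),
\]
which is exactly the first bullet of the conclusion.

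The hard part will not be the telescoping inequality but the bookkeeping in the first step: verifying that, in simultaneous minimal position, every component of $\gamma_i\cap F$ which one wants to use as a representative of a vertex of $\pi_F([\gamma_i])$ is honestly essential in $F$ in the sense of the paper's definition, and that trivial bigons between $\gamma_i$ and $\boundary F$ cannot be hidden inside $F$. The hypothesis that $P$ is a sphere with at least six punctures is precisely what guarantees $P$, $F$, and $P\setminus F$ have enough complexity for these essentiality arguments to go through and for $\mc{C}(P)$ and $\mc{AC}(F)$ both to be non-degenerate metric graphs.
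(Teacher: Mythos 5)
Your proposal takes essentially the same route as the paper: isotope the curves on a geodesic path and the boundary of $F$ into pairwise minimal position, observe that either some $\gamma_i$ can be isotoped off $F$ (so the second bullet is satisfied by that path, and the contrapositive framing handles this cleanly) or else each $\gamma_i\cap F$ is a nonempty union of essential arcs/curves of $F$, and then project to $\mc{AC}(F)$. The intermediate steps and the appeal to simultaneous minimal position match the paper almost line by line.

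The one place to be careful is the chaining step. As written you invoke a ``triangle inequality'' of the form
$d_F([\gamma_0],[\gamma_k]) \leq \sum_i d_F([\gamma_{i-1}],[\gamma_i])$, but $d_F$ between subsets of $\mc{AC}(F)$ is an infimum over pairs, not a metric, and such a triangle inequality is false in general: the nearest-point representatives you pick for the pair $(\gamma_{i-1},\gamma_i)$ need not agree with those you pick for $(\gamma_i,\gamma_{i+1})$. What rescues the argument, and what the paper does explicitly, is that $\gamma_{i-1}$ and $\gamma_i$ are entirely disjoint after minimal position, so \emph{every} component of $\gamma_{i-1}\cap F$ is disjoint from \emph{every} component of $\gamma_i\cap F$. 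Hence one may fix a single component $\rho_i\subset\gamma_i\cap F$ for each $i$ and observe that $[\rho_0],[\rho_1],\ldots,[\rho_k]$ is a genuine path in $\mc{AC}(F)$, from which $d_F([\gamma_0],[\gamma_k])\leq k$ follows by the honest triangle inequality for the vertex metric. With that small replacement, your argument is complete and coincides with the paper's.
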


\begin{proof} If either $\pi_F(A)$ or $\pi_F(B)$ is empty, the result is easily seen to be true, so we may assume both projections are non-empty. Choose $a \in A$ and $b \in B$ so that $d(a,b) = d(A,B)$. Let
\[
a = [\gamma_0], [\gamma_1], [\gamma_2], \hdots, [\gamma_n] = b
\]
be the vertices of a minimal length path from $a$ to $b$ in $\mc{C}(P)$. We choose our notation so that each $\gamma_i$ (for $i \in \{0, \hdots, n\}$) intersects $F$ minimally and for all $i \neq j$ the curves $\gamma_i$ and $\gamma_j$ also intersect minimally. In particular, for all $i \in \{0, \hdots, n-1\}$, $\gamma_i \cap \gamma_{i+1} = \nil$. We will show that either $d_F(A,B) \leq d(A,B)$ or that some $\gamma_i$ is disjoint from $F$.

Assume that for all $i$, the curve $\gamma_i$ intersects $F$. This implies that for all $i \in \{0, \hdots, n\}$ we have $\pi_F([\gamma_i]) \neq \nil$.  We will show that $d_F(A,B) \leq d(A,B)$.

For each $i \in \{0, \hdots, n\}$, let $\rho_i$ be a component of $\gamma_i \cap F$. Since $\gamma_i$ and $F$ intersect minimally, $\rho_i$ is an essential arc or simple closed curve in $F$.  Let $[v_i]$ be the isotopy class of $\rho_i$ in $F$, relative to its endpoints. Since, for each $i \in \{0, \hdots, n-1\}$, $\gamma_i \cap \gamma_{i+1} = \nil$, the arcs $\rho_i$ and $\rho_{i+1}$ are also disjoint. Hence,
\[
[v_0], \hdots, [v_n]
\]
are the vertices of a path in $\mc{AC}(F)$ from $\pi_F(a)$ to $\pi_F(b)$. Hence,
\[
d_F(A,B)\leq d_F(a,b) \leq d(a,b) = d(A,B).
\]
\end{proof}

We can now use this result to construct paths which must pass through a certain given curve.

\begin{lemma}\label{First step}
Suppose that $P$ is a sphere with at least 6 punctures. Let $c \subset P$ be an essential simple closed curve bounding a twice-punctured disc $G$ in $P$. Let $F = P\setminus G$. Let $\mc{B} \subset \mc{C}(P)$ be a subset of the vertices such that $\pi_F(\mc{B})$ is non-empty and bounded. Let $n \in \N$. Then there exists a simple closed curve $\ell \subset P$ such that all the following hold:
\begin{itemize}
\item $\ell \subset F$ bounds a twice-punctured disc in $F$.
\item $d([\ell],\mc{B}) = d([c],\mc{B}) + 1$.
\item For every vertex $[b] \in \mc{B}$, every minimal length path in $\mc{C}(P)$ from $[\ell]$ to $[b]$ passes through $[c]$.
\item $d_F([\ell], \mc{B}) \geq n$.
\end{itemize}
\end{lemma}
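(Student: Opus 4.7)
The plan is to set $N = \max(n,\, d([c],\mc{B})+2)$ and find a curve $\ell \subset F$ bounding a twice-punctured disc in $F$ with $d_F([\ell],\pi_F(\mc{B})) \geq N$; then properties (i) and (iv) are immediate from the construction, while (ii) and (iii) will follow from Lemma \ref{Minimal paths}.

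For the construction of $\ell$, observe that $F = P\setminus G$ is a planar surface with one boundary circle and at least four punctures (since $P$ has at least six), so $\mc{AC}(F)$ has infinite diameter---this is a standard fact, as $\mc{AC}(F)$ is quasi-isometric to the curve complex of a sphere with at least five punctures. Since $\pi_F(\mc{B})$ is bounded by hypothesis, there exists $[\ell']\in\mc{AC}(F)$ with $d_F([\ell'], \pi_F(\mc{B})) \geq N+1$. If $\ell'$ already is a simple closed curve bounding a twice-punctured disc in $F$, I take $\ell=\ell'$; otherwise, some complementary region of $\ell'$ in $F$ contains at least three punctures and no boundary component, and within it I draw a small loop $\ell$ encircling two of those punctures. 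Then $\ell$ is disjoint from $\ell'$, so $d_F([\ell],[\ell'])\leq 1$ and $d_F([\ell],\pi_F(\mc{B}))\geq N$, giving (i) and (iv).

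For (ii) and (iii), I apply Lemma \ref{Minimal paths} with $A = \{[\ell]\}$ and $B = \{[b]\}$ for each $[b]\in\mc{B}$. The case $[b]=[c]$ is trivial, so assume $[b]\neq[c]$; then $\pi_F([b])\neq\nil$, since any essential isotopy class in $P$ disjoint from $F$ is represented by a curve in $G$, and $G$ being a twice-punctured disc forces that class to be $[c]$. We have $d_F([\ell],[b]) \geq d_F([\ell],\pi_F(\mc{B})) \geq N$, while by the triangle inequality (using $d([\ell],[c])=1$ since $\ell \subset F$ is disjoint from $c = \partial F$),
\[
d([\ell],[b]) \;\leq\; 1 + d([c],[b]).
\]
Provided $d([c],[b]) \leq N-2$, this gives $d_F([\ell],[b]) > d([\ell],[b])$, so the first alternative of Lemma \ref{Minimal paths} fails; every minimum-length path from $[\ell]$ to $[b]$ must therefore include the isotopy class of a simple closed curve disjoint from $F$, which (again by the twice-punctured-disc structure of $G$) is necessarily $[c]$. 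This is (iii). Applying (iii) to a $[b]\in\mc{B}$ realizing $d([c],\mc{B})$, and using $[\ell]\neq[c]$, yields $d([\ell],[b]) = d([\ell],[c])+d([c],[b]) = 1 + d([c],\mc{B})$, so $d([\ell],\mc{B}) = 1 + d([c],\mc{B})$, giving (ii).

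The main obstacle is the construction: producing a curve $\ell$ that is both far from $\pi_F(\mc{B})$ in $\mc{AC}(F)$ and bounds a twice-punctured disc in $F$. The ``small loop in a complementary region'' trick handles this cleanly, given the infinite diameter of $\mc{AC}(F)$. A secondary subtlety is that (iii) as stated requires $d([c],[b]) \leq N-2$ for \emph{every} $[b]\in\mc{B}$; this is automatic when $\mc{B}$ itself is bounded in $\mc{C}(P)$, which will be the relevant setting in downstream applications of this lemma, but in full generality it would force either strengthening the hypothesis or invoking a subsurface-projection result of bounded-geodesic-image type.
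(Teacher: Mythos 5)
Your construction of $\ell$ follows the paper's route exactly: exploit the unboundedness of $\mc{AC}(F)$ against the boundedness of $\pi_F(\mc{B})$ to get a far class $[\ell']$, then replace it by a nearby curve bounding a twice-punctured disc, and finally combine Lemma \ref{Minimal paths} with the triangle inequality via $d([\ell],[c])=1$. So the approach is the same, and you have indeed put your finger on a real soft spot: as you note, the argument only forces a minimal path from $[\ell]$ to $[b]$ through $[c]$ when $d([c],[b])$ is controlled, which is not automatic for an arbitrary $[b]\in\mc{B}$. The paper's own write-up has the very same issue --- the first line of its displayed chain asserts $d([c],\mc{B})+1 \geq d([\ell],[b])$ for an arbitrary $[b]\in\mc{B}$, and that inequality is only justified when $[b]$ realizes $d([\ell],\mc{B})$ (in which case $d([\ell],[b]) = d([\ell],\mc{B}) \leq d([\ell],[c]) + d([c],\mc{B}) \leq 1 + d([c],\mc{B})$).

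Where your discussion goes astray is in the proposed remedy. You suggest the gap is harmless because $\mc{B}$ ``will be bounded in $\mc{C}(P)$ in downstream applications.'' That is false: in the proof of the main theorem, $\mc{B}$ is taken to be $\A\cup\B$ (and later $\A\cup\B\cup\{[\ell_1]\}$), and disc sets are unbounded subsets of $\mc{C}(P)$. The correct resolution is instead to weaken conclusion (iii) to the statement that every minimal-length path from $[\ell]$ to $\mc{B}$ --- that is, a path realizing $d([\ell],\mc{B})$ --- passes through $[c]$; this is what the argument actually proves (take $[b]$ to be closest to $[\ell]$), and it is all the main theorem uses, since subpaths of geodesics are geodesics and the distance bookkeeping $d([\ell_3],\mc{B}) = d([\ell_2],\mc{B}) + 1 = d([\ell_1],\mc{B}) + 2$ forces the relevant endpoints to be distance-realizing at each stage. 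Relatedly, your derivation of (ii) does not close: from the special $[b]$ realizing $d([c],\mc{B})$ you only get the upper bound $d([\ell],\mc{B}) \leq 1 + d([c],\mc{B})$; the matching lower bound requires applying (iii) to the $[b]$ realizing $d([\ell],\mc{B})$, which the paper does explicitly, and for that $[b]$ the needed inequality $d([\ell],[b]) \leq 1 + d([c],\mc{B})$ is immediate rather than conditional on your hypothesis $d([c],[b]) \leq N - 2$.
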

\begin{proof}
Begin by observing that any essential simple closed curve in $P$ which can be isotoped to lie in $G$ is actually isotopic to $c$, as $G$ is a twice-punctured disc.

Let $x = \max(n, d([c],\mc{B}))$. Since $P$ is a sphere with at least 6 punctures,  $F$ is a disc with at least 4 punctures. Since $\mc{AC}(F)$ is infinite and connected but $\pi_F(\mc{B})$ is bounded, there is an essential arc or simple closed curve $\alpha \subset F$ such that $d_F([\alpha], \mc{B}) \geq x + 3$. Since $F$ has at least 4 punctures, there is a simple closed curve $\ell \subset F\setminus \alpha$ (possibly isotopic to $\alpha$) such that $\ell$ bounds a twice-punctured disc in $P$.  Since $[\alpha]$ is at least $x + 3$ from $\pi_F(\mc{B})$ in $\mc{AC}(F)$, the vertex $[\ell]$ is at least $x+2$ from $\pi_F(\mc{B})$ in $\mc{AC}(F)$.

Let $[b] \in \mc{B}$. We show that every minimal length path from $[\ell]$ to $[b]$ in $\mc{C}(P)$ passes through $[c]$. Suppose not. Then by Lemma \ref{Minimal paths}, $d_F([\ell],[b]) \leq d([\ell],[b])$. Since the curve $\ell$ is disjoint from $c = \boundary G$, we have
\[\begin{array}{rcl}
d([c],\mc{B}) + 1 & \geq & d([\ell],[b]) \\
&\geq& d_F([\ell],[b]) \\
&\geq & d_F([\ell],\mc{B}) \\
&\geq& x + 2 \\
& \geq & d([c],\mc{B}) + 2.
\end{array}
\]
As this is nonsensical, every minimal length path from $[\ell]$ to $[b]$ in $\mc{C}(P)$ must pass through $[c]$.

Now fix $[b] \in \mc{B}$ to be the vertex closest to $[\ell]$. Let
\[
[\ell] = [\alpha_0], [\alpha_1], \hdots, [\alpha_k] = [b]
\]
be a minimal path from $[\ell]$ to $[b]$. By the previous paragraph, $[\alpha_i] = [c]$ for some $i$.

Consequently
\[
[\alpha_i], [\alpha_{i+1}], \hdots, [\alpha_k]
\]
is a path from $[c]$ to $[\mc{B}]$ of length $k - i$. Thus,
\[
d([c],\mc{B}) \leq k - i.
\]
As $\ell$ is not isotopic to $c$, $i \geq 1$, so $d([c],\mc{B}) + 1 \leq k = d([\ell],\mc{B})$. Since we also have $d([\ell],\mc{B}) \leq d([c],\mc{B}) + 1$, we see that $d([\ell],\mc{B}) = d([c],\mc{B}) + 1$, as desired.
\end{proof}

We will apply Lemma \ref{First step} several times. The first time we do so, we take the set $\mc{B}$ to be equal to the disc sets for a bridge sphere. The next lemma confirms that this is possible.

\begin{lemma}\label{diameter}
Let $(K,P)$ be a bridge position. Suppose that $c \subset P$ is an essential curve such that for all $[\delta] \in \A \cup \B$ we have $i(\delta,c) > 2$. Let $F$ be the closure of a component of $P \setminus c$. Then the sets $\pi_F(\A)$ and $\pi_F(\B)$ each have diameter at most 4.
\end{lemma}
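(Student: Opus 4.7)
By symmetry, it suffices to prove the bound for $\pi_F(\A)$ (the argument for $\pi_F(\B)$ is identical, working in the tangle on the opposite side). The strategy is an outermost-arc disc-surgery argument. First, for any single $[\delta] \in \A$, if we take a representative $\delta$ in minimal position with $c$, the components of $\delta \cap F$ are pairwise disjoint essential arcs in $F$ (at least two of them, since the hypothesis and parity on a sphere give $i(\delta,c) \geq 4$), so all vertices of $\pi_F([\delta])$ lie within pairwise distance $1$ in $\mc{AC}(F)$; thus, to bound $\diam_F(\A)$ by $4$, it suffices to show $d_F(\pi_F([\delta_1]),\pi_F([\delta_2])) \leq 2$ for every $[\delta_1],[\delta_2] \in \A$.

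Fix $[\delta_1], [\delta_2] \in \A$ with representatives bounding compressing discs $D_1, D_2 \subset B \setminus \kappa$, isotoped so that $|D_1 \cap D_2|$ is minimal. A standard innermost-disc argument rules out closed intersection components, leaving $D_1 \cap D_2$ as a collection of arcs. In the base case $D_1 \cap D_2 = \nil$, the boundaries $\delta_1, \delta_2$ are disjoint on $P$; placing $\delta_1, \delta_2, c$ in simultaneous minimal position (via the fact at the end of the introduction), the arcs of $\delta_1 \cap F$ are disjoint from those of $\delta_2 \cap F$, giving $d_F(\pi_F([\delta_1]), \pi_F([\delta_2])) \leq 1$. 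Otherwise, take an outermost arc $\alpha \subset D_1$ of $D_1 \cap D_2$, cutting off $D_1' \subset D_1$ with $\boundary D_1' = \alpha \cup \beta$ ($\beta \subset \delta_1$). Surgering $D_2$ along $\alpha$ using $D_1'$ yields a new compressing disc whose boundary $\delta'' = \beta \cup \gamma$ (with $\gamma$ a subarc of $\delta_2$) has strictly fewer intersections of its bounding disc with $D_1$. At least one choice of half gives essential $\delta''$, in which case $[\delta''] \in \A$ and $i(\delta'', c) \geq 4$ by hypothesis. Iterating produces a curve $[\delta^*] \in \A$ whose bounding disc is disjoint from $D_1$, so $\delta^*$ is disjoint from $\delta_1$ on $P$, giving $d_F(\pi_F([\delta^*]), \pi_F([\delta_1])) \leq 1$.

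The critical claim for the final bound of $2$ is that at each surgery step, $d_F(\pi_F([\delta^{(k)}]), \pi_F([\delta_2])) \leq 1$ is preserved: the substitution of $\beta \subset \delta_1$ for the complementary subarc of $\delta^{(k)}$ must leave intact some arc of $\delta^{(k)} \cap F$ parallel (or at least disjoint in $F$) to an arc of $\delta_2 \cap F$. Granting this invariant, the triangle inequality gives $d_F(\pi_F([\delta_1]), \pi_F([\delta_2])) \leq 2$, completing the diameter bound of $4$. The main obstacle is precisely this bookkeeping: ensuring that all arcs of $\delta^{(k)} \cap F$ parallel to some arc of $\delta_2 \cap F$ are not destroyed before the induction terminates. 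The hypothesis $i(\delta,c) > 2$ for every $[\delta] \in \A$ is essential, guaranteeing that each intermediate surgered curve projects nontrivially to $F$ and has enough arcs to support the needed disjointness.
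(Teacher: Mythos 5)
Your proposal shares the paper's overall architecture (reduce to showing a pairwise bound of $2$ in $\mc{AC}(F)$, then argue via disc surgery), but the core of the argument contains a genuine gap that you yourself flag as the ``main obstacle'': the invariant $d_F(\pi_F([\delta^{(k)}]), \pi_F([\delta_2])) \leq 1$ under repeated outermost-arc surgery is asserted, not proved. There is real reason to doubt it as stated. After the first surgery, $\delta''$ overlaps $\delta_1$ along $\beta$, so it is not even transverse to $\delta_1$; to iterate you must re-isotope, and with each step more of $\delta_2$ is replaced by subarcs of $\delta_1$. Nothing in your argument guarantees that the portion of $\delta^{(k)}$ coming from $\delta_2$ still contributes a full essential arc of $\delta_2 \cap F$ to the projection --- the boundary-arc $\gamma$ you retain may meet $c$ at its endpoints in such a way that every component of $\gamma \cap F$ is merely a proper subarc of a component of $\delta_2 \cap F$, and such a subarc can be inessential or can fail to isotope back to an arc of $\delta_2 \cap F$. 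The hypothesis $i(\delta, c) > 2$ ensures each curve projects nontrivially, but it does not by itself control which arcs survive the surgery.

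The paper sidesteps iteration entirely, and this is the idea your proposal is missing. It fixes one curve $\delta \in \A$ that \emph{minimizes} $|\delta \cap c|$ over all representatives of vertices in $\A$, and takes $\epsilon$ arbitrary. A single outermost-arc surgery on $D$ (bounded by $\delta$) using an outermost piece $E' \subset E$ (bounded by $\epsilon$) produces two discs $D'_1, D'_2$ whose boundaries, by the minimality of $|\delta\cap c|$, each meet $c$ at least $|\delta\cap c|$ times. Combined with the identity $|\delta\cap c| + 2|\beta\cap c| = |\boundary D'_1\cap c| + |\boundary D'_2\cap c|$, this forces the subarc $\beta \subset \epsilon$ to cross $c$ at least twice, so some component $\beta'$ of $\beta\cap F$ is an essential arc in $F$ with both endpoints on $c$. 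Since $\beta'\subset\epsilon$ and the interior of $\beta$ avoids $\delta$, we get $d_F([\delta],[\beta'])\le 1$ and $d_F([\beta'],[\epsilon])\le 1$, hence $d_F([\delta],[\epsilon]) \le 2$ in one step. The minimality of $|\delta\cap c|$ is exactly the structural input your iteration lacks; without it (or a substitute) the invariant you propose does not obviously hold, and with it no iteration is needed.
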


\begin{proof}
We prove the lemma for $\A$. The result for $\B$ follows by interchanging $\A$ and $\B$ in the following. If $F$ is a twice-punctured disc, $\mc{AC}(F)$ is a single vertex and the result is trivially true. We assume, therefore, that $F$ is a disc with at least 3 punctures. Fix an essential simple closed curve $\delta \subset P$  such that $[\delta] \in \A$ and so that $|\delta \cap c|$ is minimal (out of all curves representing vertices in $\A$). By hypothesis, $|\delta \cap c| > 2$.

Let $\epsilon$ be any curve in $P$ such that $[\epsilon] \in \A$ and $\epsilon$ intersects $c$ and $\delta$ minimally in its isotopy class. We will show that $d_F([\delta],[\epsilon]) \leq 2$. Since $[\delta]$ is fixed and $[\epsilon]$ is arbitrary, it will follow that $\pi_F(\A)$ has diameter at most 4.

If $d_F([\delta], [\epsilon]) < 2$ we are done, so assume that $d_F([\delta], [\epsilon]) \geq 2$. This implies that no component of $\epsilon \cap F$ can be isotoped to be disjoint from any component of $\delta \cap F$ in $F$. Let $D$ and $E$ be discs on the same side of $P$, with boundaries equal to the curves $\delta$ and $\epsilon$, respectively and with interiors disjoint from $P \cup K$. Out of all such discs, choose $D$ and $E$ to minimize $|D \cap E|$. Since $\epsilon$ and $\delta$ are not disjoint, $D \cap E$ is non-empty. As in the proof of Lemma \ref{Disjt Homeo}, we may conclude that $D \cap E$ consists entirely of arcs of intersection (i.e. no circles).

Let $\alpha \subset D \cap E$ be an outermost arc of intersection cutting off a disc $E' \subset E$ with interior disjoint from $D$. Let $\beta$ be the closure of the arc $\boundary E' \setminus \alpha$. Cutting $D$ open along $\alpha$ and patching in two parallel copies of $E'$ (as in Figure \ref{ChoppingDisc}) we arrive at discs $D'_1$ and $D'_2$. Both $\boundary D'_1$ and $\boundary D'_2$ must be essential curves in $P$, as otherwise we could isotope $E$ to reduce $|D \cap E|$, a contradiction to our original choice of the discs $D$ and $E$. By our original choice of the curve $\delta$, $i(\boundary D'_1, c) \geq i(\delta, c)$ and $i(\boundary D'_2, c) \geq i(\delta, c)$.

Hence, \[|\boundary D'_1 \cap c| + |\boundary D'_2 \cap c| \geq 2|\delta \cap c|.\]

From the cut-and-paste construction, it is evident that
\[\begin{array}{rcl}
|\delta \cap c| + 2 |\beta \cap c| &=& |\boundary D'_1 \cap c| + |\boundary D'_2 \cap c|\\
&>& 4.
\end{array}
\]

Thus, $|\beta \cap c| \geq 2$. Since $\epsilon$ intersected $c$ minimally, $\beta$ cannot be isotoped, relative to its endpoints, to decrease $|\beta \cap c|$. Let $\beta'$ be a component of $\beta \cap F$ having both endpoints on $c$ (i.e. with both endpoints in the interior of $\beta$). Observe that $\epsilon$ can be isotoped to be disjoint from $\beta'$ (since $\beta' \subset \epsilon$). Since the interior of $\beta$ is disjoint from $\delta$, we also have $\beta' \cap \delta = \nil$. Hence, \[d_F([\delta], [\epsilon]) \leq d_F([\delta],[\beta']) + d_F([\beta'],[\epsilon]) \leq 2,\] as desired.
\end{proof}

\begin{figure}
\centering
\labellist \small\hair 2pt
\pinlabel $E$ at 395 431
\pinlabel $D$ [b] at 580 355
\pinlabel $E'$ at 400 332
\endlabellist
\includegraphics[scale=.4]{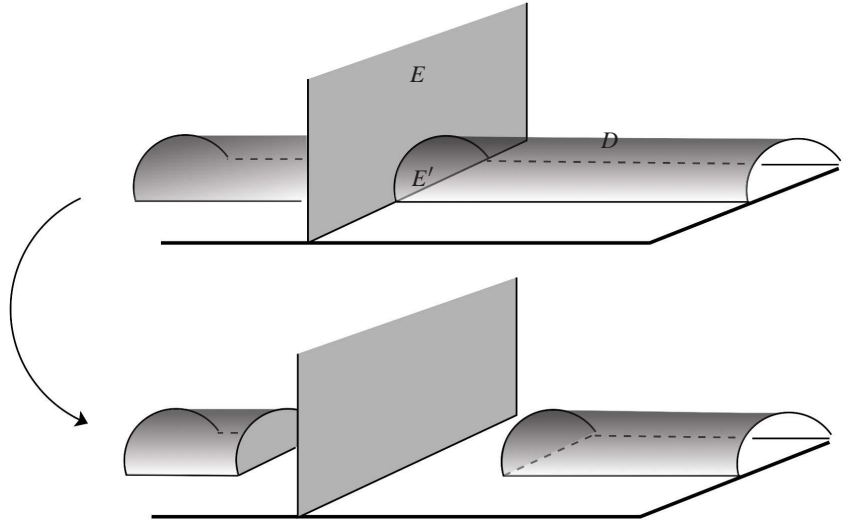}
\caption{Cutting open $D$ and patching in $E'$ creates two new discs.}
\label{ChoppingDisc}
\end{figure}

\section{Dehn twists on surfaces}
Our aim is to start with a given knot $K$, perform a single crossing change, and end up with a knot $K'$ of arbitrarily high distance. Performing a crossing change amounts to twisting a knot along a crossing disc. We will find an appropriate disc by a close examination of the curve complex for a certain bridge sphere for our starting knot $K$.

\begin{definition}
Suppose that $P$ is a surface containing an oriented curve $\gamma$. A \defn{Dehn twist} of $P$ along $\gamma$ is a homeomorphism $\tau \co P \to P$ defined by cutting $P$ open along $\gamma$, twisting by $2\pi$ in the direction of the orientation, and then regluing. This homeomorphism is supported in a small neighborhood of $\gamma$. A Dehn twist around the boundary of a twice-punctured disc which is a subset of a bridge sphere for a knot $K$ results in a knot $K'$ obtained from $K$ by a crossing change with the twice-punctured disc playing the role of the crossing disc.
\end{definition}

\begin{remark}
A curve $\beta$ intersecting $\gamma$ transversally in a point $p$, after the twist $\tau$, will, as it approaches $\gamma$ from the right (where right and left are defined by the orientation of $\gamma$) veer to the right, following parallel to $\gamma$ until it comes back to where it started at which point it will cross $\gamma$ at $p$. We can find a curve isotopic to $\tau(\beta)$ by taking parallel copies of $\gamma$, one for each point in $\beta \cap \gamma$, and then resolve the intersections between those parallel copies and $\beta$, as in Figure \ref{Fig: Dehn Twist}.
\end{remark}

\begin{figure}
\centering
\labellist \small\hair 2pt
\pinlabel $\gamma$ [b] at 8 94
\pinlabel $\beta$ [t] at 65 4
\pinlabel $\tau(\beta)$ [t]  at 339 3
\endlabellist
\includegraphics[scale = .5]{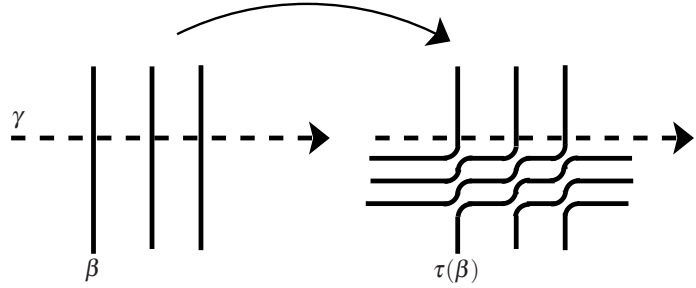}
\caption{The result of a Dehn Twist}
\label{Fig: Dehn Twist}
\end{figure}

\subsection{Distance and Intersection Number}
Suppose that  $\alpha$ and $\beta$ are curves transverse to the oriented simple closed curve $\gamma$ which is disjoint from the points $\alpha \cap \beta$. Letting $\tau$ be the Dehn twist around $\gamma$ we see that
\[
|\alpha \cap \tau(\beta)| = |\alpha \cap \beta| + |\alpha \cap \gamma| |\beta \cap \gamma|.
\]
Equivalently, 
\[
|\alpha \cap \tau(\beta)| - |\alpha \cap \gamma| |\beta \cap \gamma| = |\alpha \cap \beta|.
\]
Unfortunately, we do not know that $\tau(\beta)$ intersects $\alpha$ minimally, even if $\alpha$, $\beta$, and $\gamma$ all pairwise intersect each other minimally. The next lemma, however, gives us the needed control by showing that we can pass to intersection numbers at the cost of turning the previous equality into inequalities. It is proved by using the fact that if two curves do not intersect minimally, then there is a bigon between them. We refer the reader to \cite[Proposition 3.4]{FM} for a proof.

\begin{lemma}[Intersection Lemma]\label{IL}
Suppose that $\alpha$, $\beta$, and $\gamma$ are essential simple closed curves in $P$, with $\gamma$ oriented. Let $\tau\co P \to P$ be a Dehn twist around $\gamma$. Then the intersection numbers satisfy:
\[
|i(\alpha, \tau(\beta)) - i(\alpha,\gamma)i(\beta, \gamma)| \leq i(\alpha,\beta).
\]
\end{lemma}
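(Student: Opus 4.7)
The plan is to prove the inequality by bounding geometric intersections and then passing to minimal intersection numbers. The starting observation is that for any transverse pair of simple closed curves $X, Y$ on $P$, we have $i(X,Y) \leq |X \cap Y|$, with equality precisely when $X$ and $Y$ realize their minimal intersection number (equivalently, when there is no bigon between them in the sense of the bigon criterion cited from \cite{FM}).

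First I would fix representatives of $[\alpha], [\beta], [\gamma]$ that pairwise realize their minimal intersection numbers, using the fact quoted in the introduction that this can be achieved simultaneously. With this setup, the description of $\tau(\beta)$ via parallel copies of $\gamma$ (as illustrated in Figure \ref{Fig: Dehn Twist}) allows one to count geometric intersections directly. Specifically, in a neighborhood of $\gamma$, each point of $\beta \cap \gamma$ is replaced by an arc of $\tau(\beta)$ that runs parallel to $\gamma$ all the way around, picking up one new intersection with $\alpha$ for each point of $\alpha \cap \gamma$. Intersections away from the neighborhood of $\gamma$ remain unchanged. This yields the exact equality
\[
|\alpha \cap \tau(\beta)| \;=\; |\alpha \cap \beta| \;+\; |\alpha \cap \gamma|\cdot |\beta \cap \gamma| \;=\; i(\alpha,\beta) + i(\alpha,\gamma) \, i(\beta,\gamma).
\]

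Combining this with $i(\alpha,\tau(\beta)) \leq |\alpha \cap \tau(\beta)|$ gives the upper bound
\[
i(\alpha,\tau(\beta)) - i(\alpha,\gamma)\, i(\beta,\gamma) \;\leq\; i(\alpha,\beta).
\]
For the lower bound, I would apply the same reasoning with $\tau$ replaced by its inverse $\tau^{-1}$, twisting the curve $\tau(\beta)$. Since $\tau$ fixes $[\gamma]$ (so $i(\tau(\beta),\gamma) = i(\beta,\gamma)$) and $\tau^{-1}(\tau(\beta)) = \beta$, the same counting argument applied to $\alpha$, $\tau(\beta)$, and $\gamma$ yields
\[
i(\alpha,\beta) \;\leq\; i(\alpha,\tau(\beta)) + i(\alpha,\gamma)\, i(\beta,\gamma),
\]
which rearranges to $i(\alpha,\gamma)\, i(\beta,\gamma) - i(\alpha,\tau(\beta)) \leq i(\alpha,\beta)$. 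Combining the two bounds gives the claimed absolute value inequality.

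The main obstacle is the geometric count of $|\alpha \cap \tau(\beta)|$: one must be sure that the parallel-copy description of $\tau(\beta)$, together with the minimality hypothesis on the original intersections, really does produce no extra or cancelling intersections beyond those coming from the formula above. This is precisely where a careful local picture near $\gamma$ (and a bigon analysis near the crossings $\beta \cap \gamma$) is needed, and it is exactly the content of \cite[Proposition 3.4]{FM}; I would invoke that local analysis rather than reproduce it. Everything else is just applying $i(\cdot,\cdot) \leq |\cdot\cap\cdot|$ and exploiting the symmetry $\tau \leftrightarrow \tau^{-1}$.
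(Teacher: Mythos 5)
The paper does not actually give a proof of this lemma; it states that it follows from bigon analysis and cites \cite[Proposition 3.4]{FM}. So you are essentially writing a proof from scratch.

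Your derivation of the upper bound is correct and matches the paper's informal discussion preceding the lemma: with $\alpha$, $\beta$, $\gamma$ pairwise in minimal position (and $\gamma$ disjoint from $\alpha\cap\beta$), the parallel-copy model gives $|\alpha\cap\tau(\beta)| = i(\alpha,\beta) + i(\alpha,\gamma)\,i(\beta,\gamma)$, and $i(\alpha,\tau(\beta))\le|\alpha\cap\tau(\beta)|$ then yields $i(\alpha,\tau(\beta)) - i(\alpha,\gamma)\,i(\beta,\gamma) \le i(\alpha,\beta)$.

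The lower bound argument, however, has a genuine gap. Write $A=i(\alpha,\beta)$, $B=i(\alpha,\tau(\beta))$, $C=i(\alpha,\gamma)\,i(\beta,\gamma)$. Your direct count gives $B\le A+C$; your $\tau^{-1}$ substitution gives $A\le B+C$. The lower bound you need is $C\le A+B$, which is the third triangle inequality, and it does \emph{not} follow from the other two. Your claimed rearrangement is incorrect: $A\le B+C$ rearranges to $A-B\le C$, not to the stated $C-B\le A$. More fundamentally, both of your counting steps only produce \emph{upper} bounds on minimal intersection numbers (because $i(\cdot,\cdot)\le|\cdot\cap\cdot|$ only goes one way); no symmetry or substitution trick of this kind can produce the needed \emph{lower} bound on $i(\alpha,\tau(\beta))$. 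Establishing $C\le A+B$ requires showing that, after realizing $\tau(\beta)$, at most $i(\alpha,\beta)$ of the $i(\alpha,\gamma)\,i(\beta,\gamma)$ crossings created near $\gamma$ can be removed by bigon moves against $\alpha$. That bigon accounting is precisely the content of \cite[Proposition 3.4]{FM} and is the step your argument must actually carry out (or explicitly invoke) rather than the $\tau^{-1}$ substitution.
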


Finally, note that there is a relationship between the intersection number of two arcs or curves in a surface $F$ and their distance in $\mc{AC}(F)$ (if at least one is an arc) or $\mc{C}(F)$ (if both are curves). The proof of the following lemma is left as an exercise in cut-and-paste topology and counting. The essence of the proof may be found in \cite[Lemma 2.1]{Hempel}.

\begin{lemma}\label{intersections bound distance}
If $\alpha$ and $\beta$ are essential arcs or curves in $F$ such that $i(\alpha,\beta) > 0$ then the distance from $[\alpha]$ to $[\beta]$ in $\mc{AC}(F)$ or $\mc{C}(F)$ is at most
\[
2 + 2\log_2(i(\alpha,\beta)).
\]
\end{lemma}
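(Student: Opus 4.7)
The plan is induction on $k := i(\alpha, \beta)$.

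\emph{Base case.} For $k = 1$, a regular neighborhood $N(\alpha \cup \beta)$ has Euler characteristic $-1$, and its boundary (pushed slightly off $\alpha \cup \beta$) provides curves disjoint from both $\alpha$ and $\beta$. Provided at least one such curve is essential in $F$, this immediately gives $d([\alpha],[\beta]) \leq 2 = 2 + 2\log_2 1$. In the exceptional small surfaces where all such boundary curves are inessential, the (arc-and-)curve complex itself has diameter at most $2$ and the bound holds trivially.

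\emph{Inductive step.} For $k \geq 2$, I would construct an essential curve or arc $\gamma \subset F$ satisfying $i(\gamma, \alpha) = 0$ and $i(\gamma, \beta) \leq k/2$. Granted such a $\gamma$, the inductive hypothesis gives $d([\gamma],[\beta]) \leq 2 + 2\log_2(k/2) = 2\log_2 k$, and adjacency of $[\alpha]$ and $[\gamma]$ in $\mc{C}(F)$ (or $\mc{AC}(F)$) yields
\[
d([\alpha],[\beta]) \leq 1 + d([\gamma],[\beta]) \leq 1 + 2\log_2 k \leq 2 + 2\log_2 k.
\]

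\emph{Construction of $\gamma$.} Label the arcs of $\beta \setminus \alpha$ cyclically as $\beta_1, \ldots, \beta_k$. Each $\beta_j$ has endpoints $p_j, q_j$ on $\alpha$, splitting $\alpha$ into two sub-arcs $\alpha_j^+, \alpha_j^-$; the concatenation $\beta_j \cup \alpha_j^{\pm}$, isotoped slightly off $\alpha$ to a chosen side, is a simple closed curve $\gamma_j^{\pm}$ disjoint from $\alpha$. A counting argument over the family $\{\gamma_j^{\pm}\}$ --- tracking, at each of the other $k-1$ intersection points, which candidate curves are crossed by the adjacent arcs of $\beta$ --- selects at least one $\gamma_j^{\pm}$ with $i(\gamma_j^{\pm}, \beta) \leq k/2$. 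This is the cut-and-paste exercise referenced in the statement, with the essential combinatorial computation appearing in \cite[Lemma 2.1]{Hempel}.

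The main obstacle is obtaining the sharp $k/2$ bound on intersections after one surgery: a naive averaging over the $2k$ candidates gives only $\leq k-1$, which would yield a linear rather than logarithmic bound, so one must select the candidate strategically (or iterate a double-surgery). Secondary to this is the requirement that the chosen $\gamma$ be essential in $F$; this is handled by replacing an inessential candidate with a parallel essential curve in its vicinity, or by observing that if every candidate is inessential then $F$ itself is too simple for the distance bound to be nontrivial.
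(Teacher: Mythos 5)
Your proof follows the same inductive-surgery outline as Hempel's Lemma~2.1, which is exactly the reference the paper points to in place of a proof, so the approach matches. However, the concern you flag as the ``main obstacle'' is not actually an obstacle: the naive averaging does \emph{not} give only $\le k-1$. For a single fixed arc $\beta_j$ of $\beta \setminus \alpha$ with endpoints $p_j, q_j$, the two complementary arcs $\alpha_j^{+}, \alpha_j^{-}$ of $\alpha$ together contain all $k-2$ of the remaining intersection points of $\alpha \cap \beta$ in their interiors, so $m_j^{+} + m_j^{-} = k - 2$ and $\min(m_j^{+}, m_j^{-}) \le \lfloor (k-2)/2 \rfloor < k/2$. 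No averaging over $j$ is needed at all; choosing the shorter side for any one $j$ already gives the halving. Since the pushed-off curve $\gamma = \gamma_j^{\pm}$ meets $\beta$ only where the interior of $\alpha_j^{\pm}$ does (and is disjoint from $\alpha$), the inductive step goes through with $i(\gamma,\beta) \le \lfloor (k-2)/2 \rfloor$.

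Two genuine small points do need attention, and you nod at one of them. First, when smoothing the two corners of $\alpha_j^{\pm} \cup \beta_j$ at $p_j$ and $q_j$, one must check that the push-off to a single consistent side of the curve avoids both $\alpha$ and $\beta$ at \emph{both} corners; a mismatch would cost two extra intersections. On a planar surface (which is the only case used in this paper, since $F$ is a punctured sphere or a subsurface of one), this always works out because the algebraic intersection number $\widehat{i}(\gamma_j^{\pm}, \alpha)$ vanishes, forcing the $2$ points of $\partial N(\gamma_j^{\pm}) \cap \alpha$ onto a single boundary circle of the bicollar; the other circle is the good push-off. Second, the essentiality of $\gamma$: if $\gamma$ bounded an unpunctured disc you would get a bigon contradicting minimal position of $\alpha$ and $\beta$; the once-punctured-disc case and the case where $\gamma$ is isotopic to $\alpha$ need a sentence but are harmless ($d(\alpha,\gamma)=0$ in the latter case and one passes to the other candidate $\gamma_j^{\mp}$ or a different $j$ in the former). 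With these details filled in, your proof is correct and is the intended one.
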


\begin{remark}\label{rem: intersections}
This bound is not always the best possible. For instance, if $F$ is a sphere with at least 6 punctures and if $\alpha$ and $\beta$ are essential simple closed curves with $3 \leq d([\alpha],[\beta])$, then we must actually have $i(\alpha,\beta) \geq 3$.
\end{remark}

\subsection{Dehn twists and disc sets}
Suppose that $(K,P)$ is a bridge position and that $\gamma$ is an oriented essential curve in the punctured surface $P$ bounding a punctured disc $D$ in $P$. Let $(B,\kappa)$ be the trivial tangle above $P$ and $(B',\kappa')$ the trivial tangle below $P$. The pair $(S^3, K)$ is formed by gluing via a homeomorphism $h\co (\boundary B, \boundary \kappa) \to (\boundary B' , \boundary \kappa')$. Let $\tau$ be a Dehn twist around $\gamma$. The knot $K'$ resulting from twisting $K$ along the disc $D$ is formed by gluing $(B,\kappa)$ to $(B',\kappa')$ via the homeomorphism
\[
\tau \circ h\co(\boundary B , \boundary \kappa) \to (\boundary B' , \boundary \kappa').
\]
Thus, $(K',P)$ is still a bridge position. By identifying $(B',\kappa')$ with the tangle $(B', K' \cap B')$, we can assume that in $\mc{C}(P)$ we have $\A(K',P) = \tau_*\A(K,P)$ and $\B(K',P) = \B(K,P)$.

Finally, observe that if the disc $D$ has exactly two punctures, then $K'$ is obtained from $K$ by a crossing change.

\section{Proof of Main Theorem}
Let $K \subset S^3$ be a knot and suppose that $(b,n) \in \N^2$ with $b \geq \max(3,\b(K))$. We will show that there is a knot $K' \subset S^3$ with $\b(K') = b$ and $\d(K') \geq n$ such that $K'$ and $K$ differ by a single crossing change. The remainder of the section is taken up with the proof.

Perturb a minimal bridge sphere for $K$ a total of $b - \b(K)$ times to produce a bridge position $(K,P)$ with $b(K,P) = b$. We now set about finding a simple closed curve $\ell_3 \subset P$ bounding a twice punctured disc $D \subset P$ such that our knot $K'$ will be obtained from $K$ by twisting along $D$. Let $\A = \A(K,P)$ and $\B = \B(K,P)$ be the disc sets. Let $\mc{B} = \A \cup \B$.

Our strategy is to begin by finding a curve $\ell_1$ far from both disc sets and then carefully moving two steps beyond it to the desired curve $\ell_3$, as in Figure \ref{strategy}. We will then show that performing a Dehn twist along $\ell_3$ moves the disc set $\A$ to a disc set $\A'$ far from $\B$. We will conclude that the new knot $K'$ has the desired properties by appealing to Corollary \ref{Cor: Tomova}.

\begin{figure}[h!]
\centering
\labellist \small\hair 2pt
\pinlabel $[\ell_3]$ [l] at 287 484
\pinlabel $[\ell_2]$ [r] at 271 468
\pinlabel $[\ell_1]$ [l] at 272 442
\pinlabel $\B$ at 157 336
\pinlabel $\A$  at 322 315
\pinlabel $[\ell_3]$ [l] at 266 184
\pinlabel $[\ell_2]$ [r] at 251 164
\pinlabel $[\ell_1]$ [r] at 239 145
\pinlabel $\B$ at 131 33
\pinlabel $\A'$  at 409 132
\endlabellist
\includegraphics[scale = .5]{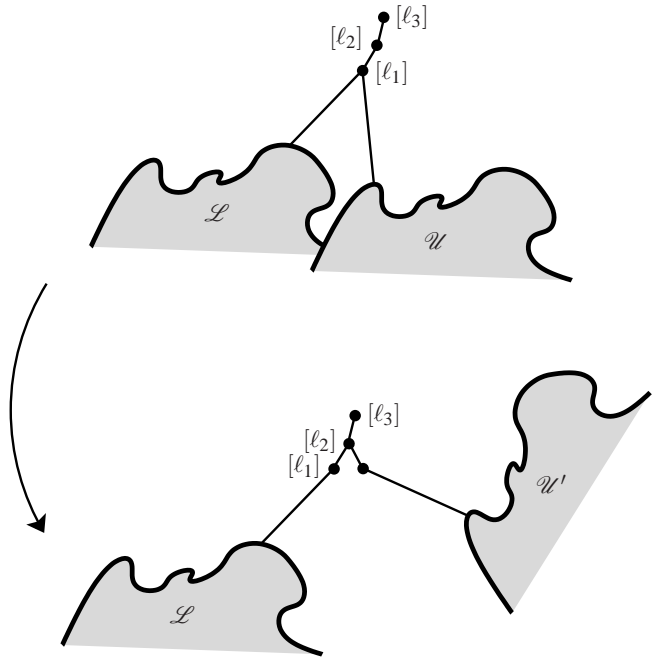}
\caption{The result of a Dehn twist around $\ell_3$ on disc complexes.}
\label{strategy}
\end{figure}

Let $N > \max(n, 2b+1, 3)$.  By Proposition \ref{FFC}, there is a simple closed curve $\ell_1 \subset P$ bounding a twice-punctured disc $G_1 \subset P$ such that $d([\ell_1],\mc{B}) \geq N+8$.  Let $F_1 = P \setminus G_1$. By Lemma \ref{intersections bound distance}, $i(\ell_1, \delta) > 2$ for every $[\delta] \in \mc{B}$. By Lemma \ref{diameter},  the subsurface projections $\pi_{F_1}(\B)$ and $\pi_{F_1}(\A)$ each have diameter at most 4. Thus, $\pi_{F_1}(\mc{B})$ is bounded. By Lemma \ref{First step}, there is a simple closed curve $\ell_2 \subset F_1$, bounding a twice-punctured disc $G_2\subset F_1$ such that
\[
d([\ell_2],\mc{B}) = d([\ell_1],\mc{B}) + 1 \geq N + 9,
\]
every minimal path from $[\ell_2]$ to $\mc{B}$ passes through $[\ell_1]$ and $d_{F_1}([\ell_2], \mc{B}) \geq N+9$. Let $F_2 = P \setminus G_2$.

Pick an arbitrary $\delta \in \A$ and isotope it to intersect $\ell_1$ and $\ell_2$ minimally. Observe that $\delta$ is not disjoint from either $\ell_1$ or $\ell_2$ and that $\pi_{F_2}(\mc{B} \cup \{[\ell_1]\})$ is bounded (appealing again to Lemma \ref{diameter} and Lemma \ref{intersections bound distance}). Let $p = i(\ell_1,\delta) + i(\ell_2, \delta)$.

By Lemma \ref{First step}, there exists a simple closed curve $\ell_3 \subset F_2$ such that the following hold:
\begin{itemize}
\item $\ell_3$ bounds a twice punctured disc $D$ in $F_2$;
\item $\ell_3$ intersects the curves $\ell_1$ and $\delta$ minimally;
\item Every minimal path in $\mc{C}(P)$ from $[\ell_3]$ to $\mc{B}$ passes through $[\ell_1]$ and $[\ell_2]$; and
\item $d_{F_2}([\ell_3], \mc{B} \cup \{[\ell_1]\}) > 2 + 2\log_2(p)$.
\end{itemize}

Orient $\ell_3$. Let $\tau_*$ be the automorphism of $\mc{C}(P)$ corresponding to a Dehn twist $\tau$ around $\ell_3$. Let $K'$ be the resulting knot and let $\A' = \A(K',P) = \tau_*\A(K,P)$ be the disc sets. Observe that $K'$ is obtained from $K$ by a single crossing change and that $d(K',P) = d(\A', \B)$. Thus, as long as $d(\A',\B)$ is at least $N$, by Corollary \ref{Cor: Tomova}, we have $n \leq d(K',P) = d(K')$ and $\b(K')=b$ as desired. The next lemma helps us verify that $d(\A',\B) \geq N$.

\begin{lemma}[Twisting kills subsurface distance]\label{Twisting kills subsurface distance}
The subsurface projection of the disc set $\A'$ onto the subsurface $F_1$ (which has boundary $\ell_1$) is distance at most 1 from the class $[\ell_2]$ in the arc-and-curve complex $\mc{AC}(F_1)$. That is,
\[ d_{F_1}([\ell_2], \A') \leq 1 \]
\end{lemma}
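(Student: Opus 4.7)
The plan is to exhibit a single essential arc in $F_1$ that is disjoint from $\ell_2$ and lies in $\pi_{F_1}(\A')$. Since $\A' = \tau_*\A$ and $[\delta] \in \A$, we have $[\tau(\delta)] \in \A'$, so it suffices to find such an arc inside $\pi_{F_1}([\tau(\delta)])$. The key structural fact is that $\ell_3 \subset F_2 = P \setminus G_2$, so $\ell_3$ is disjoint from $\ell_2 = \boundary G_2$, giving $i(\ell_2, \ell_3) = 0$. Moreover $\ell_1 = \boundary G_1 \subset F_2$ (because $G_1$ and $G_2$ are disjoint), so $[\ell_1]$ is a vertex of $\mc{AC}(F_2)$.

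The strategy is to count components of $\tau(\delta) \cap F_1$ and show that most of them miss $\ell_2$. By the Intersection Lemma (Lemma \ref{IL}),
\[
i(\ell_2, \tau(\delta)) \leq i(\ell_2, \delta) \leq p,
\]
since $i(\ell_2, \ell_3) = 0$. On the other hand, applying the contrapositive of Lemma \ref{intersections bound distance} inside $\mc{AC}(F_2)$ to the bound $d_{F_2}([\ell_3], \mc{B} \cup \{[\ell_1]\}) > 2 + 2\log_2 p$ produces $i(\ell_1, \ell_3) > p$ and $i(\delta, \ell_3) > p$, where we use that intersection numbers inside $F_2$ agree with intersection numbers in $P$ because $\ell_1, \ell_3$ and the relevant arcs of $\delta \cap F_2$ all lie in $F_2$. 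A second application of Lemma \ref{IL} then gives
\[
i(\ell_1, \tau(\delta)) \geq i(\ell_1, \ell_3)\, i(\delta, \ell_3) - i(\ell_1, \delta) > p^2 - p.
\]

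Now choose a representative $\gamma$ of $[\tau(\delta)]$ in simultaneous minimal position with the disjoint curves $\ell_1$ and $\ell_2$, which is possible by the standard fact cited at the end of the introduction. Then the number of components of $\gamma \cap F_1$ equals $|\gamma \cap \ell_1|/2 = i(\tau(\delta), \ell_1)/2$, while the number of components meeting $\ell_2$ is at most $|\gamma \cap \ell_2| = i(\tau(\delta), \ell_2)$. Since $p$ is very large (the distances from $\delta$ to $\ell_1$ and $\ell_2$ force $p$ to grow with $N$ via Lemma \ref{intersections bound distance}), the inequality $p^2 - p > 2p$ holds, so $i(\tau(\delta), \ell_1)/2 > i(\tau(\delta), \ell_2)$ and at least one component $\alpha$ of $\gamma \cap F_1$ is disjoint from $\ell_2$. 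Minimal position of $\gamma$ with $\boundary F_1 = \ell_1$ guarantees $\alpha$ is essential in $F_1$, so $[\alpha] \in \pi_{F_1}([\tau(\delta)]) \subset \pi_{F_1}(\A')$ and $d_{F_1}([\alpha], [\ell_2]) \leq 1$, yielding $d_{F_1}([\ell_2], \A') \leq 1$.

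The main obstacle is the intersection-number arithmetic: the delicate condition $d_{F_2}([\ell_3], \mc{B} \cup \{[\ell_1]\}) > 2 + 2\log_2 p$ built into the construction of $\ell_3$ is precisely what forces both $i(\ell_1, \ell_3)$ and $i(\delta, \ell_3)$ to exceed $p$, so that the twist around $\ell_3$ creates many new crossings with $\ell_1$ while preserving (indeed only decreasing) crossings with $\ell_2$, the latter because $\ell_2$ avoids the twist region entirely. A secondary technicality is the use of simultaneous minimal position with the two disjoint curves $\ell_1$ and $\ell_2$, needed to convert intersection-number estimates into an honest count of components of $\gamma \cap F_1$ and of those meeting $\ell_2$.
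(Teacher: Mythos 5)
Your proof is correct and follows the same overall strategy as the paper: exhibit the vertex $[\tau(\delta)] \in \A'$, put a representative $\gamma$ in simultaneous minimal position with the disjoint curves $\ell_1$ and $\ell_2$, and show by an intersection count that some arc of $\gamma \cap F_1$ misses $\ell_2$. The difference lies in the intersection-number bookkeeping, and your version is a bit more wasteful but equally valid.

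The paper bounds $i(\ell_1,\tau(\delta))$ from below using only the trivial fact $i(\ell_3,\delta) \geq 1$, obtaining $i(\ell_1,\tau(\delta)) \geq i(\ell_1,\ell_3) - i(\ell_1,\delta)$; the bound $i(\ell_1,\ell_3) > p = i(\ell_1,\delta) + i(\ell_2,\delta)$ then gives $i(\ell_1,\tau(\delta)) > i(\ell_2,\delta) = i(\ell_2,\tau(\delta))$. It then invokes a parity argument: since $\ell_2$ separates $F_1$, each arc of $\gamma \cap F_1$ meets $\ell_2$ an even number of times, so if every arc met $\ell_2$ one would have $i(\ell_2,\tau(\delta)) \geq 2a = i(\ell_1,\tau(\delta))$, a contradiction. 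You instead extract the stronger bound $i(\delta,\ell_3) > p$ (also valid, via the contrapositive of Lemma \ref{intersections bound distance} applied in $\mc{AC}(F_2)$), getting $i(\ell_1,\tau(\delta)) > p^2 - p$; this extra factor of $p$ lets you use the cruder count in which each arc meeting $\ell_2$ contributes only $\geq 1$ intersection, at the modest cost of needing $p > 3$ (which holds since $i(\ell_1,\delta) \geq 2^{(N+6)/2}$ with $N > 3$). Two small remarks. First, your use of $i(\ell_2,\tau(\delta)) \leq i(\ell_2,\delta)$ via the Intersection Lemma is fine, though the paper observes the cleaner exact statement $i(\ell_2,\tau(\delta)) = i(\ell_2,\delta)$ directly from the disjointness of $\ell_2$ from the twisting region. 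Second, the claim that geometric intersection numbers of curves contained in $F_2$ agree when computed in $F_2$ or in $P$ (which you flag and which the paper uses silently) does require that $F_2$ be an essential subsurface; this holds here since $\ell_2$ is essential in $P$, but it is worth being explicit that this is what justifies transferring the $\mc{AC}(F_2)$-distance bounds to intersection numbers in $P$.
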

\begin{proof}
To show that $d_{F_1}([\ell_2], \A') \leq 1$, we need only show that there is a vertex $v \in \A'$ and a representative $\gamma$ of $v$ such that $\ell_2$ can be isotoped to be  disjoint from an arc of $\gamma \cap F_1$ and $\gamma$ intersects $\ell_1 = \boundary F_1$ minimally up to isotopy. It turns out that any class $v = [\tau(\delta)]$ will do the trick. The main difficulty in showing this stems from the fact that although $\delta$ and $\ell_3$ intersect $\ell_1$ minimally, the curve $\tau(\delta)$ may not intersect $\ell_1$ minimally. We will use the Intersection Lemma to handle this.

By Lemma \ref{IL},
\begin{equation}\label{Ineq 1}
\begin{array}{rcl}
i(\ell_1, \tau(\delta)) &\geq& i(\ell_1, \ell_3) i(\ell_3, \delta) - i(\ell_1, \delta)\\
&\geq& i(\ell_1, \ell_3) - i(\ell_1, \delta)
\end{array}
\end{equation}
where the last inequality holds since $i(\ell_3, \delta) > 0$.

By Lemma \ref{intersections bound distance} and our choice of $\ell_3$,
\[
i(\ell_1, \ell_3) \geq 2^{(d_{F_2}([\ell_1],[\ell_3])-2)/2}  > i(\ell_2, \delta) + i(\ell_1, \delta).
\]
Observe, also, that since $\ell_3 \cap \ell_2 = \nil$, $i(\ell_2,\delta) = i(\ell_2, \tau(\delta))$. Combining with Inequality \eqref{Ineq 1} we obtain:
\begin{equation}\label{Ineq 2}
i(\ell_1, \tau(\delta)) > i(\ell_2, \tau(\delta)).
\end{equation}

Let $\gamma$ be the result of isotoping $\tau(\delta)$ to minimize the number of intersections with both $\ell_1$ and $\ell_2$. Then $\gamma \in \A'$ and $\gamma$ intersects $\ell_1$ minimally in exactly $i(\ell_1, \tau(\delta))$ points. Since each arc has two endpoints, $\gamma \cap F_1$ has exactly $a = i(\ell_1,\tau(\delta))/2$ arcs. The curve $\ell_2$ separates the surface $F_1$, so each arc component of $\gamma \cap F_1$ must intersect $\ell_2$ in an even number of points. Thus, if  no component of $\gamma \cap F_1$ is disjoint from $\ell_2$ we would have
\[
i(\ell_2, \tau(\delta)) \geq 2a = i(\ell_1, \tau(\delta)).
\]

However, this contradicts Inequality \eqref{Ineq 2}, so at least one component of $\gamma \cap F_1$ is disjoint from $\ell_2$. Since $\gamma$ intersects $F_1$ minimally in its isotopy class, this is enough to show that $d_{F_1}([\ell_2], [\tau(\delta)]) \leq 1$, as desired.
\end{proof}

Let $\alpha_1 \subset P$ be a simple closed curve, isotoped to intersect $F_1$ minimally such that $[\alpha_1] \in \B$ and so that there is a component $\alpha'_1 \subset \alpha_1 \cap F_1$ with
\[
d_{F_1}([\alpha_1'],[\ell_2]) = d_{F_1}(\B, [\ell_2]).
\]
Similarly, let $\alpha_2 \subset P$ and $\beta_1 \subset P$ be simple closed curves, isotoped to intersect $F_1$ minimally, such that $[\alpha_2] \in \B$, $[\beta_1] \in \A'$ and so that there are components $\alpha'_2 \subset (\alpha_2 \cap F_1)$ and $\beta'_1 \subset (\beta_1 \cap F_1)$ with
\[
d_{F_1}([\alpha'_2], [\beta_1']) = d_{F_1}(\A', \B).
\]
Finally, let $\beta_2 \subset P$ be a simple closed curve, isotoped to intersect $F_1$ minimally, such that $[\beta_2] \in \A'$ and so that there is a component $\beta'_2 \subset (\beta_2 \cap F_1)$ with $d_{F_1}([\beta'_2], [\ell_2]) \leq 1$. (Such a curve exists by Lemma \ref{Twisting kills subsurface distance}.)

By Lemma \ref{diameter}, $d_{F_1}([\alpha'_1], [\alpha'_2]) \leq 4$ and $d_{F_1}([\beta'_1], [\beta'_2]) \leq 4$. Thus, by our choice of $\ell_2$ and the triangle inequality we have:
\[\begin{array}{rcl}
N + 9&\leq& d_{F_1}([\alpha'_1], [\ell_2]) \\
&\leq& d_{F_1}([\alpha'_1], [\alpha'_2]) + d_{F_1}([\alpha'_2], [\beta_1']) + d_{F_1}([\beta_1'],[\beta'_2]) + d_{F_1}([\beta'_2], [\ell_2])\\
&\leq & 9 + d_{F_1}(\A',\B).
\end{array}
\]
Consequently,
\[
N \leq d_{F_1}(\A',\B).
\]

Recalling that $G_1 = P\setminus F_1$ is a disc with 2 punctures, by Lemma \ref{Minimal paths}, one of the following occurs:
\begin{itemize}
\item[(a)]  $d_{F_1}(\A',\B) \leq d(\A',\B)$
\item[(b)] Every minimal path from $\A'$ to $\B$ contains $[\ell_1]$.
\end{itemize}

Suppose that (a) holds. Then
\[
N \leq d_{F_1}(\A',\B) \leq d(\A',\B),
\]
as we were hoping.

Now assume that (b) holds. Since every minimal length path from $\A'$ to $\B$ contains $[\ell_1]$, we have:
\[
 d(\A',\B) \geq d(\A', [\ell_1]) + d([\ell_1], \B) \geq d([\ell_1], \B) \geq N
 \]
as desired.
 \qed.

\section{Precedents}

If we work with oriented knots, then the Gordian graph is the 1-skeleton of the ``Gordian complex'' defined by Hirasawa and Uchida \cite{HU}. A number of authors since them have studied the Gordian complex from various points of view. To our knowledge, we are the first to explore the relationship between bridge number, bridge distance and the Gordian graph.

The curve complex was originally defined by Harvey \cite{Harvey} and many of its important properties, including those of subsurface projections, were discovered by Masur and Minsky \cites{MM1, MM2}.

The bridge distance of a knot is a ``knot-version'' of a concept from 3-manifold theory, as we now describe. A (3-dimensional) \defn{handlebody} is the result of attaching solid tubes to a 3-dimensional ball to create an orientable manifold. If $X$ and $Y$ are handlebodies with boundaries having the same genus, we can create a 3-dimensional manifold $M$ by gluing $\boundary X$ to $\boundary Y$ via a homeomorpism $\boundary X \to \boundary Y$. We identify both $X$ and $Y$ with their images in $M$ and call the pair $(X,Y)$ a \defn{Heegaard splitting} of $M$. Every closed, orientable 3-manifold has a Heegaard splitting. If we have a collection of unknotted properly embedded arcs $\kappa \subset X$  and another such collection $\kappa' \subset Y$ and if $\boundary \kappa = \boundary \kappa'$, the surface $\boundary X = \boundary Y$ in $M$ is a \defn{bridge surface} for the knot or link  $\kappa \cup \kappa'$ and $|\kappa|$ is called the \defn{bridge  number} of the surface.

A \defn{disc set} for a Heegaard splitting is the collection of vertices in the curve complex of the Heegaard surface which bound discs all to one side of the Heegaard surface. A Heegaard splitting thus gives rise to two disc sets in the curve complex of the Heegaard surface. Hempel \cite{Hempel} defined the distance of a Heegaard splitting to be the distance in the curve complex between those disc sets. Saito \cite{Saito} adapted that definition to bridge surfaces for knots such that the bridge surface has genus 1 and bridge number 1. Bachman-Schleimer \cite{BS} extended and modified the definition so that it applies to all bridge surfaces. The bridge distance in this paper is a simplification of that of Bachman-Schleimer, and is similar in spirit to Saito's definition.

Lustig and Moriah \cite{LM} prove a result for Heegaard splittings which is similar to our result for knots. They construct high distance Heegaard splittings by performing a single Dehn twist on a gluing map giving a Heegaard splitting of the 3-sphere. Their construction uses Thurston's theory of projective measured foliations to find an appropriate twisting curve.  Finally, our Lemma \ref{diameter} is similar to a result of Tao Li \cite{Li} and Masur-Schleimer \cite{MS} for Heegaard splittings.

\section{Possible Extensions}

A knot has a \defn{$(g,b)$-splitting} if it has a genus $g$ bridge surface of bridge number $b$. This paper studied $(0,b)$ bridge splittings for $b \geq 3$. It should be relatively easy to adapt our results to $(g,b)$ bridge splittings with $g, b \geq 1$. The only real challenge is dealing with the case when $b \leq 2$, as it may be impossible to find a curve in the surface bounding a twice-punctured disc and which is disjoint from a given essential curve. Nevertheless, we make the following conjectures:

\begin{conjecture}
Let $K$ be knot in a 3-manifold $M$ with a $(g,b)$ splitting with $g \geq 1$. Let $(g',b',n)$ be a triple of non-negative integers with $g' \geq g$,  $b' \geq b$. Then there is a knot $K' \subset M$ obtained from $K$ by a single crossing change, such that $K'$ has a $(g',b')$ splitting of distance at least $n$.
\end{conjecture}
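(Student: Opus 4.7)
The plan is to mirror the proof of the Main Theorem almost verbatim, replacing the bridge sphere by a genus-$g'$ bridge surface and upgrading each of the supporting lemmas to the positive-genus setting. First, I would pass from the given $(g,b)$ splitting to a $(g',b')$ bridge position $(K,P)$ by applying $g'-g$ Heegaard stabilizations followed by $b'-b$ perturbations; both operations preserve the knot type and produce a bridge surface of the prescribed complexity. The disc sets $\A,\B \subset \mc{C}(P)$ are then defined as the vertices of isotopy classes bounding compressing discs in the two handlebody-minus-tangle pieces.

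Next, I would establish genus-$g'$ versions of the background machinery. Lemma \ref{Pair Homeo} is replaced by the standard fact that any two unknotted $b'$-strand tangles in a genus-$g'$ handlebody are equivalent by a homeomorphism of pairs; Lemma \ref{Disjt Homeo} follows by the same innermost-disc cut-and-paste argument, provided the compressing discs for the handlebody are also brought into the disc-swap picture; and the Blair--Tomova--Yoshizawa input already applies to curve complexes of punctured surfaces of any genus, supplying curves arbitrarily far from the disc set of a single handlebody. With these in hand, Lemma \ref{convexity}, Proposition \ref{FFC}, Lemma \ref{First step}, and Lemma \ref{diameter} all go through essentially unchanged, because their proofs depend only on the infinite connectedness of $\mc{C}(P)$, the triangle inequality, and innermost-disc/outermost-arc arguments that are insensitive to the genus of $P$.

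The twisting construction then proceeds identically to the Main Theorem: pick $\ell_1$ far from $\A \cup \B$, push two steps further to a curve $\ell_3$ bounding a twice-punctured disc $D \subset P$ satisfying the required subsurface-projection control, and let $K'$ be the knot obtained by the associated crossing change along $D$. The Intersection Lemma and Lemma \ref{Twisting kills subsurface distance} apply without modification, and the genus-$g'$ analog of Corollary \ref{Cor: Tomova}, available from the Bachman--Schleimer extension of Tomova's bound to positive-genus bridge surfaces, forces $(K',P)$ to realize a $(g',b')$ splitting of distance at least $n$.

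The main obstacle, as the authors flag, is the case $b' \leq 2$. With only two or fewer punctures one generally cannot find a simple closed curve in $P$ that bounds a twice-punctured disc and is simultaneously disjoint from a prescribed essential curve, so the analog of Lemma \ref{First step} breaks down at its current final step. Here one must exploit the extra room coming from the positive-genus factor: either replace the twice-punctured-disc curve by a non-separating curve of $P$ (so that the twist corresponds to a more exotic modification that can still be interpreted as a crossing change via an isotopy through the handle structure), or first perturb to $b' + 1$ or $b' + 2$, execute the construction, and then argue that the resulting bridge surface can be de-perturbed back to genus $g'$ and bridge number $b'$ while maintaining the distance lower bound. Controlling the genus and bridge number of $(K',P)$ after these modifications is where I expect the genuine technical difficulty to lie, and is presumably why the authors state the extension as a conjecture rather than a theorem.
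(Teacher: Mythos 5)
The statement you are attempting to prove is stated in the paper as an open \emph{conjecture}, not a theorem: the authors explicitly say that adapting their argument to $(g,b)$-splittings should be ``relatively easy'' for $b\geq 3$, flag the case $b\leq 2$ as the genuine obstruction, and then decline to give a proof. Your proposal reproduces that sketch faithfully and correctly identifies the same bottleneck, but it does not close the gap, so what you have written is a plan, not a proof. In particular, your two suggested workarounds for $b'\leq 2$ do not obviously work. Twisting about a \emph{non-separating} curve in the bridge surface is not a crossing change on the knot (the resulting surgery curve does not bound a twice-punctured disc, so there is no crossing disc and no interpretation of the operation as a single crossing change). And the perturb-then-deperturb idea requires you to know that the high-distance bridge surface you construct at the higher bridge number actually destabilizes back to a $(g',b')$ surface for the new knot $K'$ while retaining a distance bound; high distance is normally a certificate of \emph{unstabilizability}, so there is a tension here that you have not resolved.

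Even in the cases you regard as ``essentially unchanged'' ($g'\geq 1$, $b'\geq 3$), several of the claims require real verification rather than assertion. The analogue of Lemma~\ref{Disjt Homeo} for unknotted tangles in a genus-$g'$ handlebody is not a formal corollary of the genus-zero argument: the disc you compress along may be non-separating or may separate off a positive-genus piece, and the uniqueness statement behind Lemma~\ref{Pair Homeo} is subtler when handlebody compressing discs interact with the bridge discs. The input from Blair--Tomova--Yoshizawa and the distance-bounds-bridge-number corollary (your invocation of a ``Bachman--Schleimer extension of Tomova's bound'') need precise citations in the exact form required, namely that a bridge surface of distance $>2b'$ is the unique minimal $(g',b')$-surface up to isotopy; the existing literature treats related but not identical statements, and this is precisely the kind of thing one must pin down before the strategy becomes a proof. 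Until both the $b'\leq 2$ obstruction and the positive-genus machinery are actually established, the statement remains, as the authors say, a conjecture.
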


The curve complex for a $(0,2)$-bridge sphere of a knot $K \subset S^3$ is not particularly useful as it is disconnected. The Farey graph for the bridge sphere is often a useful substitute. It is the graph whose vertex set consists of essential simple closed curves in the 4-punctured sphere and whose edge set consists of pairs of vertices represented by curves intersecting exactly twice. A result analogous to the one proved in this paper should hold in that setting:

\begin{conjecture}
For each knot $K \subset S^3$ with $\b(K) \leq 2$ and for each $n \in \N$ there exists a 2-bridge knot $K' \subset S^3$ obtained from $K$ by a single crossing change and with $\d(K') \geq n$.
\end{conjecture}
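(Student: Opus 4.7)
The plan is to adapt the argument of the Main Theorem, replacing the curve complex by the Farey graph $\mc{F} = \mc{F}(P)$ of a 4-punctured sphere. First put $K$ in 2-bridge position $(K,P)$, perturbing the 1-bridge sphere once if $\b(K) = 1$. In a 2-bridge position each disc set is a single vertex of $\mc{F}$, so write $\A = \{v_\A\}$ and $\B = \{v_\B\}$; the bridge distance $\d$ referenced in the conjecture is the Farey-distance of these two vertices. Crucially, every essential simple closed curve $\gamma \subset P$ bounds a twice-punctured disc on \emph{both} sides, and is therefore automatically a valid crossing curve: the Dehn twist $\tau_\gamma$ (viewed as a regluing of $P$) realizes a crossing change turning $K$ into a knot $K'$ that still has $P$ as a bridge sphere, with disc sets $\A' = \{\tau_\gamma v_\A\}$ and $\B' = \{v_\B\}$.

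I would next produce a curve $\gamma$ far from both disc sets. Since $\mc{F}$ has infinite diameter, the argument of Proposition \ref{FFC} adapts verbatim: for every $N \in \N$ there is a vertex $[\gamma]$ with $d_\mc{F}([\gamma], v_\A) \geq N$ and $d_\mc{F}([\gamma], v_\B) \geq N$. The heart of the argument is then a displacement estimate. The Dehn twist $\tau_\gamma$ acts on $\mc{F}$ as a parabolic isometry fixing $[\gamma]$, and $\mc{F}$ is Gromov-hyperbolic (in fact quasi-isometric to a tree), so a parabolic should displace $v_\A$ linearly in its distance to the fixed point. Specifically, I would aim to show
\[
d_\mc{F}(\tau_\gamma v_\A, v_\A) \geq 2N - C
\]
for an absolute constant $C$, either by an explicit continued-fraction computation (twisting around $\gamma = p/q$ lengthens the Stern--Brocot expansion of $v_\A$ by roughly $2 d_\mc{F}([\gamma], v_\A)$) or, mimicking the paper's own style, by bounding intersection numbers via Lemma \ref{IL} and converting to distances via Lemma \ref{intersections bound distance}. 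The triangle inequality would then yield
\[
d_\mc{F}(\tau_\gamma v_\A, v_\B) \geq d_\mc{F}(\tau_\gamma v_\A, v_\A) - d_\mc{F}(v_\A, v_\B) \geq 2N - C - \d(K),
\]
which exceeds $n$ once $N$ is chosen large. A Farey-graph analogue of Corollary \ref{Cor: Tomova} (in the spirit of Saito or Bachman--Schleimer) would then promote this into $\d(K') \geq n$ and $\b(K') = 2$.

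The main obstacle is making the displacement estimate rigorous, with explicit constants, for a \emph{single} Dehn twist. Small hand-computations (for instance, $\gamma$ at Farey-distance $k$ from $v_\A$ gives $d_\mc{F}(\tau_\gamma v_\A, v_\A) = 2k-1$ in every case I checked) strongly support the estimate and fit the general theory of parabolics in hyperbolic graphs, but a clean general proof requires either careful continued-fraction bookkeeping or an intersection-number argument specialised to the 4-punctured sphere. A secondary obstacle is locating or proving the precise Farey-graph version of Theorem \ref{Tomova} for 4-punctured bridge spheres; once this is available the remainder of the argument parallels (and is actually simpler than) the proof of the Main Theorem, since the 4-punctured sphere admits no nontrivial essential subsurfaces and so the subsurface-projection machinery of Section 5 is not needed.
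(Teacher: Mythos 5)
You have selected the final \emph{conjecture} of the paper; the authors explicitly leave this statement open and offer no proof, so there is no argument in the text to compare against. What the paper does offer is a one-sentence suggestion -- work in the Farey graph of the 4-punctured sphere -- and your proposal is a fleshed-out roadmap along exactly that suggested line, so in that loose sense the two are aligned.

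Your outline has the right shape, and you have correctly located the two places where real work is needed. Let me sharpen the assessment of each. The larger of the two is the displacement estimate for a \emph{single} Dehn twist, and it is worth being clear about why this is delicate rather than routine. The mechanism that makes a Dehn twist $\tau_\gamma$ move points far in the curve complex is the bounded geodesic image theorem: if the annular projection $d_\gamma(v,\tau_\gamma^m v)$ exceeds the Masur--Minsky threshold $M$, then every geodesic from $v$ to $\tau_\gamma^m v$ passes through the $1$-neighborhood of $\gamma$, and linear displacement $d(v,\tau_\gamma^m v) \gtrsim 2\,d(\gamma,v)$ follows. A single twist shifts the annular projection by only about $2$, and for the $4$-punctured sphere this is right at (or below) the relevant threshold, so the off-the-shelf machinery does not apply. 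This is precisely the difficulty the main theorem of the paper sidesteps (for $b\ge 3$) with its two-step curve $\ell_1,\ell_2,\ell_3$ and the subsurface-projection argument onto $F_1$, which you correctly note is unavailable when $P$ is a $4$-punctured sphere. Your hand computations do suggest that the displacement is nonetheless linear in this special case, and the continued-fraction route (an explicit description of Farey-graph geodesics and of the action of $\begin{pmatrix}1&2\\0&1\end{pmatrix}$) is the most promising way to make it rigorous; but one should not expect the Lemma~\ref{IL}/Lemma~\ref{intersections bound distance} route to work by itself, since Lemma~\ref{intersections bound distance} only gives an \emph{upper} bound on distance in terms of intersection number, whereas here you also need a lower bound, which is a special feature of the Farey graph and not of curve complexes in general.

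The second gap you flag -- a Farey analogue of Theorem~\ref{Tomova}/Corollary~\ref{Cor: Tomova} -- is, I believe, the easier one. For $b=2$ one does not need Tomova's theorem at all: by the classical uniqueness of $2$-bridge splittings (Schubert; see also Otal), a knot with a $2$-bridge sphere whose two disc-set vertices are at Farey distance $\ge 2$ is a nontrivial $2$-bridge knot, its $2$-bridge sphere is unique up to isotopy, and hence the Farey distance $d_{\mathcal F}(v_{\mathcal{A}'},v_{\mathcal B})$ is a well-defined invariant which can serve as $\mathfrak d(K')$. (You should make this choice of definition explicit, since the paper's Definition~\ref{Def: bridge distance} is only stated for $\mathfrak b(K)\ge 3$, so the symbol $\mathfrak d$ in the conjecture is not literally covered by it.) One further small point worth recording: you also need to observe that a Dehn twist around an essential curve in a $4$-punctured bridge sphere is automatically a crossing change (every essential curve bounds a twice-punctured disc on each side), and that the resulting $K'$ is again a knot rather than a $2$-component link; the former you note, the latter is immediate because a crossing change preserves the number of components. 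In summary: the statement is unproven in the paper, your approach is the natural one and matches the authors' own hint, and the one genuine mathematical obstacle is the single-twist displacement estimate, which deserves a careful continued-fraction (or explicit geodesic) argument rather than an appeal to general curve-complex technology.
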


\section{Acknowledgements} The authors are grateful to the American Institute of Mathematics for its support through the SQuaREs program. The third author was supported by NSF Grant DMS-1006369. The fifth author was supported by NSF Grant DMS-1054450. This work was also partially supported by a grant from the Colby College Division of Natural Sciences. The authors also thank the anonymous referees for their careful reading and helpful comments.

\begin{bibdiv}
\begin{biblist}
\bibselect{Gordian}
\end{biblist}
\end{bibdiv}

\end{document}